\pdfoutput=1
\RequirePackage{amsmath}
\RequirePackage{fix-cm}
\RequirePackage{rotating}

\documentclass[11pt]{scrartcl}
\usepackage{amsthm}
\usepackage{thmtools, thm-restate}
\usepackage[title,header]{appendix}		

\newtheorem{theorem}{Theorem}[section]
\newtheorem{proposition}[theorem]{Proposition}
\newtheorem{lemma}[theorem]{Lemma}

\newtheorem{corollary}[theorem]{Corollary}




\usepackage{booktabs}
\newcommand{\head}[1]{%
	\begin{tabular}[m]{@{}c@{}}
		#1
	\end{tabular}%
}

\usepackage{rotating}
\usepackage[table]{xcolor}

\usepackage{amsfonts,amssymb}
\usepackage{graphicx}
\usepackage{commath,mathtools,bbm}	
\usepackage{nicefrac}
\usepackage{epstopdf}
\usepackage{url}
\usepackage[mathscr]{euscript}
\usepackage{enumitem}

\usepackage{thmtools, thm-restate}
\usepackage[ruled,vlined,linesnumbered]{algorithm2e} 
\SetKwRepeat{Do}{do}{while} 

\usepackage{footnote}
\usepackage{longtable,booktabs}
\usepackage{csvsimple,pgfkeys}

\usepackage[colorinlistoftodos,prependcaption]{todonotes}

\DeclareGraphicsRule{.tif}{png}{.png}{`convert #1 `dirname #1`/`basename #1 .tif`.png}

\usepackage{scalerel}
\usepackage{tikz}
\usetikzlibrary{svg.path}
\usetikzlibrary{positioning}

\usepackage{pgfplots}

\definecolor{orcidlogocol}{HTML}{A6CE39}
\tikzset{
	orcidlogo/.pic={
		\fill[orcidlogocol] svg{M256,128c0,70.7-57.3,128-128,128C57.3,256,0,198.7,0,128C0,57.3,57.3,0,128,0C198.7,0,256,57.3,256,128z};
		\fill[white] svg{M86.3,186.2H70.9V79.1h15.4v48.4V186.2z}
		svg{M108.9,79.1h41.6c39.6,0,57,28.3,57,53.6c0,27.5-21.5,53.6-56.8,53.6h-41.8V79.1z M124.3,172.4h24.5c34.9,0,42.9-26.5,42.9-39.7c0-21.5-13.7-39.7-43.7-39.7h-23.7V172.4z}
		svg{M88.7,56.8c0,5.5-4.5,10.1-10.1,10.1c-5.6,0-10.1-4.6-10.1-10.1c0-5.6,4.5-10.1,10.1-10.1C84.2,46.7,88.7,51.3,88.7,56.8z};
	}
}

\newbox{\myorcidaffilbox}
\sbox{\myorcidaffilbox}{\large\mbox{\scalerel*{
			\begin{tikzpicture}[yscale=-1,transform shape]
			\pic{orcidlogo};
			\end{tikzpicture}
		}{|}}}
\newcommand\orcidicon[1]{\href{https://orcid.org/#1}{\usebox{\myorcidaffilbox}}}

\usepackage{authblk} 

\DeclareMathOperator{\vol}{vol}
\DeclareMathOperator{\degree}{deg}
\DeclareMathOperator{\diag}{diag}

\newcommand{\h}[1][G]{h(#1)}
\newcommand{\hvol}[1][G]{h_{\vol}(#1)}
\newcommand{\sparse}[1][G]{\phi(#1)}
\newcommand{\mincut}[1][G]{\zeta_{\min}(#1)}

\newcommand{\trace}{\operatorname{tr}}

\newcommand{\F}{\mathcal{F}}
\newcommand{\FF}{\widetilde{\F}}
\newcommand{\real}{\mathbb{R}}

\newcommand{\inprod}[2]{\ensuremath{\langle #1, #2 \rangle}}
\renewcommand{\vec}[1]{\ensuremath{#1}}	
\newcommand{\mat}[1]{\ensuremath{#1}}

\definecolor{lightgray1}{gray}{0.90}

\usepackage[hypertexnames=false]{hyperref}
\hypersetup{colorlinks=true,citecolor=blue,linkcolor=red,urlcolor=blue,breaklinks=true}
\usepackage[nameinlink]{cleveref}

\crefname{equation}{}{}
\crefname{section}{\textsection}{\textsection}
\Crefname{algocf}{Algorithm}{Algorithms} 
\AtBeginEnvironment{appendices}{\crefalias{section}{appendix}\crefalias{subsection}{appendix}}
\Crefname{secinapp}{Appendix}{Appendices}
\begin{document}
	
	\title{Edge expansion of a graph: SDP-based computational strategies
		\thanks{A proceedings article containing part of this work appeared in~\cite{isco-paper}. This article now contains full and complete proofs, an additional algorithm based on a Discrete Netwon-Dinkelbach method, and more computational results.\\
			This research was funded in part by the Austrian Science Fund (FWF) [10.55776/DOC78]. For open access purposes, the authors have applied a CC BY public copyright license to any author-accepted manuscript version arising from this submission.
		}%
	}
	
	\author[1]{Akshay Gupte~\orcidicon{0000-0002-7839-165X}}
	\author[2]{Melanie Siebenhofer~\orcidicon{0000-0002-9101-834X}}
	\author[3]{Angelika Wiegele~\orcidicon{0000-0003-1670-7951}}
	
	\affil[1]{University of Edinburgh \& Maxwell Institute for Mathematical Sciences, UK,
		\href{mailto: akshay.gupte@ed.ac.uk}{akshay.gupte@ed.ac.uk}}
	\affil[2]{Alpen-Adria-Universit\"{a}t Klagenfurt, Austria,
		\href{mailto:melanie.siebenhofer@aau.at}{melanie.siebenhofer@aau.at}}
	\affil[3]{Alpen-Adria-Universit\"at Klagenfurt,
		Austria \& Universit\"at zu K\"oln, Germany, 
		\href{mailto:angelika.wiegele@aau.at}{angelika.wiegele@aau.at}}

	\date{\today}
	
	\maketitle

	\begin{abstract}
		Computing the edge expansion of a graph is a famously hard combinatorial problem for which there have been many
		approximation studies. We present two variants of exact algorithms using semidefinite programming (SDP)
		to compute this constant for any graph. The first variant uses the SDP relaxation first to reduce the search
		space considerably. 
		The problem is then transformed into instances of max-cut problems which are solved with an SDP-based
		state-of-the-art solver.
		Our second variant to compute the edge expansion uses Dinkelbach's algorithm for fractional programming. This is, we have to solve a parametrized optimization problem and again we use semidefinite programming to obtain solutions of the parametrized problems.
                Numerical results demonstrate that with our algorithms
		one can compute the edge expansion on graphs up
		to~400 vertices in a routine way, including instances
		where standard branch-and-cut solvers fail.
                To the best of our knowledge, these are the first SDP-based solvers for computing the edge expansion of a graph.
		
		\medskip
		\textbf{Keywords:} Edge expansion, Cheeger constant, bisection problems, semidefinite programming, parametric submodular minimization
		
	\end{abstract}

	\section{Introduction}
	\label{sec:introduction}
	
	Let $G = (V,E)$ be a simple connected graph on $n \ge 3$ vertices 
	and with $m$ edges.
	A \emph{cut} $(S,S^{\prime})$, for any $\emptyset\neq S\subset V$ and $S' = V \setminus S$, in $G$ is a partition of its vertices.
	The \emph{(unweighted) edge expansion}, also called the \emph{Cheeger constant} or \emph{isoperimetric number} or \emph{sparsest cut}, of $G$ is a ratio that measures the relative number of edges across any vertex partition. 
	It is defined as 
	\begin{align*}
	\h &= \min_{S} \left\{ \frac{|\partial S|}{\min\{|S|, |S^{\prime}|\}}:  \emptyset\neq S\subset V\right\}\\
	&= \min_{S} \left\{ \frac{|\partial S|}{|S|}: S\subset V,\, 1 \le |S| \le \frac{n}{2}\right\},
	\end{align*}
	where $\partial S = \{(i,j)\in E : i \in S,\, j\in S^\prime \}$ is the cut-set associated with any vertex subset~$S \subset V$, and $S^{\prime}=V\setminus S$. This constant is positive if and only if the graph is connected, and the exact value tells us that the number of edges across any cut in $G$ is at least $\h$ times the number of vertices in the smaller partition. Another version of edge expansion that accounts for vertex degrees is called the \emph{conductance} of a graph. It is defined as
	\begin{align*}	\label{def:hvol}
	\begin{split}
	\hvol &= \min_{S} \left\{ \frac{|\partial S|}{\min\{\vol(S), \vol(S^{\prime}) \}}: \emptyset\neq S\subset V\right\}\\
	&= \min_{S} \left\{\frac{|\partial S|}{\vol(S)}: S \subset V,\, 1 \le \vol(S) \le m\right\},
	\end{split}
	\end{align*}
	where $\vol(S) = \sum_{v\in S}\degree(v)$,
	and the second equality is due to $\vol(S) + \vol(S^{\prime}) = 2m$.

	Edge expansions arise in the study of expander graphs, for which there is a rich body of literature with applications in network science, coding theory, cryptography, complexity theory, cf.~\cite{sarnak2004communications,hoory2006expander,goldreich2011basic}. A graph with $\h \ge c$, for some constant $c > 0$, is called a $c$-expander. A graph with $\h < 1$ is said to have a bottleneck since there are not too many edges across it. A threshold for good expansion properties is having $\h \ge 1$, 
	which is desirable in many of the above applications.
	The famous Mihail-Vazirani conjecture \cite{mihail1992expansion,feder92matroids} in polyhedral combinatorics claims that the graph (1-skeleton) of any
	0/1-polytope has edge expansion at
	least~1. This has been proven to be true for several combinatorial polytopes \cite{mihail1992expansion,kaibel2004expansion} and bases-exchange graphs of matroids \cite{Anari2019}, and a weaker form was established recently for random 0/1-polytopes \cite{leroux2023randompolytopes}. Lattice polytopes were constructed in \cite{gupte2019dantzigfigures} with the property that in every dimension their graphs lie on the threshold of being good expanders (i.e., $\h = 1$).

	Computing the edge expansion is related to the \emph{uniform sparsest
		cut} problem which asks for computing a cut in the graph with the
	smallest sparsity, where sparsity is defined as the ratio of the size
	of the cut to the product of the sizes of the two partitions, 
	\begin{align*}
	\sparse &= \min_{S} \left\{ \frac{|\partial
		S|}{|S||S^{\prime}|}: \emptyset\neq S\subset V\right\}\\
	&= \min_{S}\,\left\{\frac{|\partial
		S|}{|S||S^{\prime}|}: S\subset V,\, 1 \le |S| \le \frac{n}{2}\right\}. 
	\end{align*}
	Since $\nicefrac{n}{2} \le |S^{\prime}| \le n$, it holds that
	$|S||S^{\prime}| \le n\cdot |S| \le 2 |S||S^{\prime}|$, and hence 
	$\h  \le n \cdot \sparse \le 2\h$, 
	which implies that the edge expansion problem is related to the sparsest cut problem up to a constant factor of 2. In particular, any cut $(S,S^{\prime})$ that is $\alpha$-approx for $\sparse$ (resp.~$\h$) is a $2\alpha$-approx for $\h$ (resp.~$\sparse$), because $|\partial S|/|S| \le n|\partial S|/(|S||S'|) \le  \alpha \cdot n\cdot \sparse \le 2\alpha \cdot\h$.
	
	There are polynomial reductions between $\h, \hvol$ and $\sparse$ and they are all NP-hard to compute \cite{Leighton99logn}, in contrast to the minimum-cut of a
	graph which can be computed in polynomial time. Hence, almost all of the literature on edge expansion is devoted to finding good theoretical bounds. These are generally associated with the eigenvalues of the Laplacian matrix of the graph and form the basis for the field of spectral graph theory (see the monograph \cite{chung1997spectral}). There have also been many approximation studies on this topic \cite{Leighton99logn,arora2009expander,nachmias2010testing,raghavendra2010graph}, and semidefinite optimization (SDP) has been a popular tool in this regard. The best-known approximation for $\sparse$ is the famous $\mathcal{O}(\sqrt{\log n})$ factor by Arora et al.~\cite{arora2009expander} which improved upon the earlier $\mathcal{O}({\log n})$-approximation~\cite{Leighton99logn}. The analysis is based on an SDP relaxation with triangle inequalities and uses metric embeddings and concentration of measure results. Meira and Miyazawa~\cite{Meira2011} developed a branch-and-bound algorithm for computing $\sparse$ using SDP relaxations and SDP-based heuristics. Recall that $\sparse$ is related to $\h$ in the approximate sense (up to a factor~2) but not in the exact sense. To the best of our knowledge, there is no exact solution algorithm for $\h$.

	\paragraph{Contribution and outline}

	We adopt mathematical programming approaches for numerical computation of $\h$.
	All our approaches make use of tight bounds obtained via semidefinite programming.
	The first algorithm works in two phases. 
	In the first phase, we split the problem into
	subproblems and by computing lower and upper bounds for these
	subproblems, we can exclude a significant part of the search space.
	In the second phase, we either solve the remaining subproblems to
	optimality or until a subproblem can be pruned due to the bounds.
	For the second phase, 
	we transform each subproblem into an instance of a
	max-cut problem and compute the maximum cut using an SDP-based solver.
	
	The second algorithm we implement uses the idea of Dinkelbach's algorithm
	to solve fractional optimization problems.
	The main concept of this algorithm is to iteratively solve linearly
	constrained binary quadratic programs. We solve these problems again by
	transforming them into instances of max-cut and using an SDP-based
	solver to compute the maximum cut. 
	
	We perform numerical experiments on different types of instances which
	demonstrate the effectiveness of our approaches. To the best of our
	knowledge, no other algorithms are capable of computing the edge
	expansion for graphs with a few hundred vertices.

	The rest of the paper is structured as follows. 
	In \Cref{sec:qp_and_sdp_formulation} we formulate the problem as a mixed-binary quadratic program and present an SDP relaxation.
	\Cref{sec:fixingk} investigates a related problem, namely the
	$k$-bisection problem. We introduce lower and upper bounds and describe how the $k$-bisection
	problem can be solved by transforming it to a max-cut problem. 
	The first algorithm (relying on the $k$-bisection problem) for
	computing $\h$ is introduced 
	in \Cref{sec:split-and-bound}, and another algorithm (following
	Dinkelbach's idea) in \Cref{sec:dinkelbach}.
	The
	performance of all algorithms is demonstrated in
	\Cref{sec:numericalresults}, followed by conclusions in \Cref{sec:summary}.
	
	\paragraph{Notation}
	The set of $n\times n$ real symmetric matrices is denoted by ${\mathcal S}^n$. The positive semidefiniteness condition for $\mat{X} \in \mathcal{S}^{n}$ is written as $\mat{X} \succeq 0$. 
	The trace of $\mat X$ is written as $\trace(\mat X)$ and defined as the sum of its diagonal elements. 
	The trace inner product for~$\mat X, \mat Y \in {\mathcal S}^{n}$ is defined as
	$\inprod{\mat X}{\mat Y}\;=\; \trace (\mat X \mat Y)$ and
	the operator $\diag(\mat X)$ returns the main diagonal of matrix
	$\mat X$ as a vector. The vector of all ones is $\vec e$ and the matrix of all ones is $\mat{J} = \vec e \vec e^\top$. 
	
	For an $n$-vertex graph $G=(V,E)$, the adjacency matrix is a binary matrix $\mat{A}\in\mathcal{S}^{n}$ having $A_{ij}=1$ if and only if~$(i,j)\in E$, and the degree matrix is a $n\times n$ positive diagonal matrix $\mat{D}$ having $D_{ii}$ equal to the degree of vertex $i\in V$. The Laplacian matrix is $\mat{L} = \mat{D} - \mat{A}$, and thus has its nonzero entries as~$L_{ii} = \degree(i)$ and $L_{ij} = -1$ for $(i,j) \in E$.
	The minimum cut in $G$ is defined as $\mincut = \min_{\emptyset\neq S\subset V}\, \abs{\partial S}$.

	\section{Formulations and SDP relaxations} \label{sec:qp_and_sdp_formulation}
	To write an algebraic optimization formulation for cut problems in graphs, we represent a cut $(S,S')$ in $G$ by its incidence vector $\vec \chi^{S}\in\{0,1\}^{n}$ which
	has~$\chi^{S}_{i}=1$ if and only if~$i\in S$. The cut function is the size of a cut-set, also called the value of the cut, and is equal to
	\[
         |\partial S| = \sum_{(i,j)\in E}\big(\chi^{S}_{i} - \chi^{S}_{j}\big)^{2} = \big(\vec \chi^{S}\big)^{\top} \mat{L} \vec \chi^{S} \ .
	\]
        Any binary vector $\vec x \in \{0,1\}^n$ represents a cut in this graph. 
        Denote the set of all cuts with $S$ containing at least one vertex and at most half of the vertices by
	\begin{equation*}
	\F = \left\{\vec x \in \{0,1\}^n : 1 \leq \vec e^\top \vec x \leq \frac n 2 \right\} .  
	\end{equation*}
	Using the common expression $\vec x^\top \mat L \vec x$ for the cut function, the edge expansion problem is
	
	\begin{align}
	\label{hforms}
	\begin{split}
	\h &= \min_{\vec x} \; \Big\{\dfrac{\vec x^\top \mat L \vec x}{\vec e^\top \vec x } : \vec x\in\F \Big\} 
	\medskip\\
	&= \min_{\vec x,y} \; \Big\{y : \dfrac{\vec x^\top \mat L \vec x}{\vec e^\top \vec x } \le y,\, \vec x\in\F, y \in \mathbb{R}\Big\} 
	\medskip\\
	&= \min_{\vec x,y} \; \big\{y : \vec x^\top \mat L \vec x - y\, \vec e^\top \vec x  \le 0,\, \vec x\in\F, y \in \mathbb{R}\big\}. 
	\end{split}
	\end{align}
	
	The last formulation is a mixed-binary quadratically constrained problem (MIQCP). 
	Standard branch-and-cut solvers may require a large
	computation time with these formulations even for instances
	of small to medium size, as we will report in \Cref{sec:numericalresults}. 
	
	Although the focus of this paper is on computing $\h$, let us also mention for the sake of completeness that analogous formulations can be derived for the graph conductance (weighted edge expansion) $\hvol$ that was defined in~\cref{sec:introduction}, by optimizing over the set
	\[
	\F_{\vol} = \left\{\vec x \in \{0,1\}^n : 1 \leq \vec d^\top \vec x \leq m \right\} ,  
	\]
	where $\vec d = \diag(\mat D)$ is the vector formed by the vertex degrees. For example, the same steps as in \cref{hforms} yields the MIQCP
	\[
	\hvol = \min_{\vec x,y} \; \big\{y : \vec x^\top \mat L \vec x - y \, \vec d^\top \vec x  \le 0,\, \vec x\in\F_{\vol}\big\}. 
	\]
	
	\subsection{Semidefinite relaxations}

	A well-known lower bound for the edge expansion is the \emph{spectral bound}.
	It is based on the second smallest eigenvalue of the Laplacian matrix of the graph,
	namely $h(G) \geq \nicefrac{\lambda_{2}(\mat L)}{2}$.
	One way to derive this bound is by considering the following SDP relaxation
	\begin{equation}
	\label{eq:SDP-expansion}
	\begin{array}{llllll}
	h(G) \geq &\min_{\widetilde{\mat X}, k} ~ & \frac 1 k \inprod{L}{\widetilde{\mat X}} &~
	=~ \min_{\mat X} ~ & \inprod{L}{X} & \\[0.1em]
	
	&\text{s.t.}~ & \trace(\widetilde{\mat X}) = k 
	&\phantom{~=~~}\text{s.t.}~ & \trace(\mat X) = 1\\[0.1em]
	
	&& \inprod{\mat J}{\widetilde{\mat X}} = k^2
	&& 1 \leq \inprod{\mat J}{\mat X} \leq  \frac n 2 \\
	
	&& 1 \leq k \leq \frac{n}{2}
	&& \mat X \succeq 0, \\
	&& \widetilde{\mat X} \succeq 0
	\end{array}
	\end{equation}
	where $\tilde{\mat X}$ models $\vec x\vec x^\top$ 
	and~$k = \vec e^\top \vec x$.  We add the redundant constraint~$\inprod{\mat J}{\widetilde{\mat X}} = k^2$
	and relax $\vec x \in \{0,1\}^n$, $\widetilde{\mat X} = \vec x \vec x^\top$ to $ \widetilde{\mat X}\succeq \mat 0$
	to obtain the above SDP relaxation.
	To eliminate the variable $k$ in the second (equivalent) SDP formulation, we
	scale~$\mat X = \frac{1}{k}\widetilde{\mat X}$.

        \begin{proposition}
        The optimal solution of the second SDP in~\eqref{eq:SDP-expansion} is $\nicefrac{\lambda_{2}(\mat L)}{2}$.
	\end{proposition}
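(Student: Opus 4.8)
The plan is to show that the second SDP in~\eqref{eq:SDP-expansion} is exactly the standard SDP for $\lambda_2(\mat L)$, i.e.\ the Rayleigh quotient characterization of the second eigenvalue restricted to the subspace orthogonal to $\vec e$. Recall that $\mat L \succeq 0$ with $\mat L \vec e = \vec 0$, so $\lambda_1(\mat L) = 0$ with eigenvector $\vec e$, and
\[
\lambda_2(\mat L) = \min\left\{ \vec z^\top \mat L \vec z : \|\vec z\|^2 = 1,\ \vec e^\top \vec z = 0 \right\} .
\]
First I would argue the upper bound: I would exhibit a feasible $\mat X$ for the second SDP with objective value $\lambda_2(\mat L)/2$. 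Take $\vec z$ a unit eigenvector for $\lambda_2(\mat L)$, which is orthogonal to $\vec e$, and set $\mat X = \tfrac12(\vec z \vec z^\top + \tfrac1n \mat J)$. Then $\mat X \succeq 0$, $\trace(\mat X) = \tfrac12(1 + 1) = 1$, and $\inprod{\mat J}{\mat X} = \tfrac12(\inprod{\mat J}{\vec z\vec z^\top} + \tfrac1n \inprod{\mat J}{\mat J}) = \tfrac12(0 + n) = \tfrac n2$, so the box constraint $1 \le \inprod{\mat J}{\mat X} \le \tfrac n2$ holds for $n \ge 2$; finally $\inprod{\mat L}{\mat X} = \tfrac12(\vec z^\top \mat L \vec z + \tfrac1n \inprod{\mat L}{\mat J}) = \tfrac12 \lambda_2(\mat L)$ since $\inprod{\mat L}{\mat J} = \vec e^\top \mat L \vec e = 0$. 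Hence the optimal value is at most $\lambda_2(\mat L)/2$.

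For the lower bound, let $\mat X$ be any feasible matrix. Since $\mat X \succeq 0$ and $\trace(\mat X) = 1$, $\mat X$ is a density matrix, so it can be written as a convex combination $\mat X = \sum_i p_i \vec v_i \vec v_i^\top$ with $\{\vec v_i\}$ an orthonormal basis (its eigenvectors) and $p_i \ge 0$, $\sum_i p_i = 1$. Decompose each $\vec v_i = \alpha_i \tfrac{\vec e}{\sqrt n} + \vec w_i$ with $\vec w_i \perp \vec e$ and $\alpha_i^2 + \|\vec w_i\|^2 = 1$. Then $\inprod{\mat L}{\mat X} = \sum_i p_i \vec v_i^\top \mat L \vec v_i = \sum_i p_i \vec w_i^\top \mat L \vec w_i \ge \lambda_2(\mat L) \sum_i p_i \|\vec w_i\|^2$, using $\mat L \vec e = \vec 0$ and the Rayleigh bound on the complement of $\vec e$. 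It remains to show $\sum_i p_i \|\vec w_i\|^2 \ge \tfrac12$, equivalently $\sum_i p_i \alpha_i^2 \le \tfrac12$. But $\sum_i p_i \alpha_i^2 = \sum_i p_i \big(\vec v_i^\top \tfrac{\vec e}{\sqrt n}\big)^2 = \tfrac1n \vec e^\top \mat X \vec e = \tfrac1n \inprod{\mat J}{\mat X} \le \tfrac1n \cdot \tfrac n2 = \tfrac12$, where the last inequality is precisely the upper bound in the box constraint $\inprod{\mat J}{\mat X} \le \tfrac n2$. Combining, $\inprod{\mat L}{\mat X} \ge \lambda_2(\mat L)/2$ for every feasible $\mat X$, which together with the first part proves the proposition.

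The only place that requires care — and the main point to get right — is recognizing that the constraint actually used in the lower bound is $\inprod{\mat J}{\mat X}\le \tfrac n2$, while the constraint $\inprod{\mat J}{\mat X}\ge 1$ plays no role (it is implied at the optimum but is there to keep the relaxation faithful to $k = \vec e^\top\vec x \ge 1$); one should also note that the argument needs $n\ge 2$ for the witness $\mat X$ in the first part to be feasible, which holds since $n\ge 3$ by assumption. Everything else is the routine eigenvalue/Rayleigh-quotient bookkeeping sketched above, and the equivalence of the two SDPs in~\eqref{eq:SDP-expansion} via the scaling $\mat X = \tfrac1k\widetilde{\mat X}$ is already asserted in the text preceding the statement.
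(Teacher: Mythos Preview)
Your proof is correct, but it takes a genuinely different route from the paper. The paper proceeds via SDP duality: it verifies Slater's condition (with the strictly feasible point $\tfrac1n\mat I$), writes the dual explicitly, observes that the dual constraint $\mat L - v_1\mat I + (v_2-v_3)\mat J \succeq 0$ decouples along the eigenvector $\vec e$ and its orthogonal complement, and then solves the resulting linear program in $(v_1,v_2,v_3)$ to obtain the value $\lambda_2(\mat L)/2$. Your argument is entirely primal: you exhibit the feasible witness $\mat X = \tfrac12(\vec z\vec z^\top + \tfrac1n\mat J)$ achieving $\lambda_2(\mat L)/2$, and then bound every feasible $\mat X$ from below by a Rayleigh-quotient computation using only $\inprod{\mat J}{\mat X}\le \tfrac n2$. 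Your approach is more elementary in that it avoids invoking strong duality altogether and yields an explicit primal optimizer; the paper's approach is more systematic and additionally produces the dual certificate $(v_1,v_2,v_3) = (\lambda_2(\mat L),\, \lambda_2(\mat L)/n,\, 0)$, which confirms your side remark that the constraint $\inprod{\mat J}{\mat X}\ge 1$ is slack at optimum.
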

	\begin{proof}
        It is well-known
        (cf.~\cite[\textsection5.2.6]{boyd2004convex}) that existence
        of a weak Slater point (i.e., a feasible point satisfying only
        the nonlinear convex constraints strictly) is sufficient to
        achieve strong duality for a general convex optimization
        problem. For the second SDP in~\eqref{eq:SDP-expansion}, the
        diagonal matrix $\frac 1 n \mat I$ is a weak Slater point
        because it is positive definite and has its trace equal to 1
        which is also equal to the sum of all its entries. Hence,
        strong duality holds with the dual which is
		\begin{equation*}
		\max_{\vec v}\; \Big\{v_1 - \frac n 2 v_2 + v_3 : \mat L - v_1 \mat I + (v_2 - v_3) \mat J \succeq 0,\, v_2,v_3 \geq 0\Big\}.
		\end{equation*}
                The psd constraint on the matrix $\mat W = \mat L - v_1 \mat I + (v_2 -
        v_3) \mat J$ can be
        simplified after making two observations. First we have $\mat
        J = ee^\top$. Then we deduce, after using the eigendecomposition of the Laplacian $\mat L$, that because $\mat
        L$ has $\vec e$ as its eigenvector corresponding to the
        smallest eigenvalue 0, and $\mat I$ is the identity matrix, it
        follows that $\vec e$ is also an eigenvector of $\mat W$ with
        eigenvalue
                $0 - v_1 + e^\top e (v_2 - v_3) = -v_1 + n(v_2 - v_3)$. The other eigenvalues of~$\mat W$ are then~$\lambda_i(\mat L) - v_1$ for $2 \leq i \leq n$. Therefore, we can write the dual as
		\begin{equation*}
		\max_{\vec v}\; \Big\{v_1 - \frac n 2 v_2 + v_3  : n(v_2 - v_3) \geq v_1,\ \lambda_2(\mat L) \geq v_1,\ v_2,v_3 \geq 0\Big\},
		\end{equation*}
		which is a linear program with optimal solution $v_1 = \lambda_2(\mat L)$, $v_2 = \nicefrac{\lambda_2(\mat L)}{n}$ and $v_3 = 0$ and optimal value $\nicefrac{\lambda_2(\mat L)}{2}$.

	\end{proof}

	To strengthen the SDP relaxation~\cref{eq:SDP-expansion} we round down the upper bound to~$\lfloor\frac{n}{2}\rfloor$
	and add the following facet-inducing inequalities of the boolean quadric polytope \cite{padberg1989boolean} for $\widetilde{\mat X}$
	\begin{subequations}
		\begin{align}
		0 \leq \widetilde{ X}_{ij} & \leq \widetilde{ X}_{ii}\\
		\widetilde{ X}_{i\ell} + \widetilde{ X}_{j\ell} - \widetilde{ X}_{ij} & \leq \widetilde{ X}_{\ell\ell}\\
		\widetilde{ X}_{ii} + \widetilde{ X}_{jj} - \widetilde{ X}_{ij} &\leq 1\\
		\widetilde{ X}_{ii} +  \widetilde{ X}_{jj} +  \widetilde{ X}_{\ell\ell} -  \widetilde{ X}_{ij} - 
		\widetilde{ X}_{i\ell} -  \widetilde{ X}_{j\ell} & \leq 1,
		\end{align}
	\end{subequations}
	resulting in the following valid inequalities for~$\mat X$
	\begin{subequations}
		\label{eq:bqpsX}
		\begin{align}
		0 \leq { X}_{ij} & \leq { X}_{ii} \label{subeq:bqpsX-1}\\
		{ X}_{i\ell} + { X}_{j\ell} - { X}_{ij} & \leq { X}_{\ell\ell} \label{subeq:bqpsX-2}\\
		{ X}_{ii} + { X}_{jj} - {X}_{ij} &\leq 1 \label{subeq:bqpsX-3} \\ 
		{X}_{ii} +  {X}_{jj} +  {X}_{\ell\ell} -  {X}_{ij} - 
		{X}_{i\ell} -  {X}_{j\ell}  & \leq 1\label{subeq:bqpsX-4} 
		\end{align}
	\end{subequations}
	for all $1 \leq i,\ j,\ \ell \leq n$.
	Note, that in~\cref{subeq:bqpsX-3,subeq:bqpsX-4} we have to replace~$\frac{1}{k}$ in the rhs by its upper bound~1 in order to obtain a formulation without~$k$. Therefore, we cannot expect these inequalities to strengthen the SDP relaxation significantly.

	\subsection{Illustrative examples for motivation}
	
	We motivate our algorithm by considering the example of the graph of the grlex polytope, which is described in~\cite{gupte2019dantzigfigures}.  \Cref{tab:lowerbounds} compares different lower bounds on $\h$ for these graphs. The first column indicates the dimension of the polytope and the
	second column lists the number of vertices in the associated graph.
	The third column gives the edge expansion that is known to be one for
	these graphs in all dimensions~\cite{gupte2019dantzigfigures}.
	The spectral bound is displayed in the fourth column. 
	Column~5 lists the optimal value of the SDP relaxation~\cref{eq:SDP-expansion} 
	strengthened by inequalities \cref{eq:bqpsX} derived from the boolean
	quadric polytope.
	Column~6 displays a lower bound that is very easy to compute: the
	minimum cut of the graph divided by the largest possible size of the
	smaller set of the bipartition of the vertices, that is 
	$\lfloor \frac{n}{2}\rfloor$. 
	In the last column, the minimum of the lower bounds $\ell_k$ for~$1
	\le k \le \lfloor \frac n 2 \rfloor$ is listed with $\ell_k$ being a
	bound related to the
	solution of~\cref{eq:SDP-expansion} for $k$ fixed. The definition of
	$\ell_k$ follows in \Cref{sec:lower-bisection}.
	
	The numbers in the table show that some of these bounds are
	very weak, in particular, if the number of vertices increases.
	Interestingly, if we divide the edge expansion problem
	into~$\lfloor \frac n 2 \rfloor$ many subproblems with fixed
	denominator (as we did to obtain the numbers in column~6)
	the lower bound we obtain by taking the minimum over all SDP relaxations
	for the subproblems seems to be stronger than the other lower bounds
	presented in \Cref{tab:lowerbounds}.
	We will, therefore, take this direction of computing the edge
	expansion, namely, we will compute upper and lower bounds on the
	problem with fixed $k$. Using these bounds will allow to exclude a
	(hopefully) large number of potential sizes $k$ of the smaller
	partition. This will leave us with computing the maximum cut of a
	graph with fixed sizes of the partition $k$ and $n-k$ for a few values
	of $k$ only. 
	
	\begin{table}
		\centering
		\begin{tabular}{ll|c|cccc} \toprule
			\head{$d$~} & \head{$n$} & \head{$\h$} & \head{$\nicefrac{\lambda_2}{2}$} & \head{\cref{eq:SDP-expansion} \& \cref{eq:bqpsX}~} & \head{$\mincut / \lfloor \frac n 2\rfloor$~} & \head{$\min_k\,\ell_k$} \\ \midrule
			2    &     4    &1 &    1.0000    &    1.0000    &    1.0000    &    1.0000 \\ 
			3    &     7    &1 &    0.7929    &    0.8750    &    1.0000    &    1.0000 \\ 
			4    &    11    &1 &    0.6662    &    0.7857    &    0.8000    &    1.0000 \\ 
			5    &    16    &1 &    0.5811    &    0.7273    &    0.6250    &    1.0000 \\ 
			6    &    22    &1 &    0.5231    &    0.6875    &    0.5455    &    1.0000 \\ 
			7    &    29    &1 &    0.4820    &    0.6591    &    0.5000    &    1.0000 \\ 
			8    &    37    &1 &    0.4516    &    0.6379    &    0.4444    &    1.0000  \\
			\bottomrule
		\end{tabular}
		\caption{Comparison of lower bounds for graphs from the grlex polytope
			in dimension~$d$.}
		\label{tab:lowerbounds}
	\end{table}
	
	\section{Fixing the size $k$: Bisection problem}
	\label{sec:fixingk}
	If the size $k$ of the smaller set of the partition of an optimum
	cut is known,
	the edge expansion problem would result in a scaled bisection problem.
	That is, we ask for a partition of the vertices into
	two parts, one of size $k$ and one of size $n-k$, such that the number
	of edges joining these two sets is minimized. This problem is NP-hard~\cite{garey76npcomplete} and has the following formulations for any $k\in\{1,2,\dots,\lfloor \frac n 2\rfloor\}$,
	\begin{equation}\label{eq:formulationExact-fixedk}
	\begin{array}{rll}
	h_k = \frac 1 k&\min_{\vec x} \ &  \vec x^\top \mat L \vec x \\[0.2em]
	&\text{s.t.} & \vec e^\top \vec x =k \\
	&& \vec x \in \{0,1\}^n,
	\end{array}
	\end{equation}
	but standard branch-and-cut solvers can solve these in reasonable time only for small-sized graphs. 
	
	Since SDP-based bounds have been shown to be very strong for partitioning
	problems,
	cf.~\cite{Meijer2023,gpp-shudian,karisch-rendl,wokowicz-zhao1999}, we
	exploit these bounds with our first algorithm to compute the edge expansion.
	In the subsequent sections, we describe how to obtain lower and upper
	bounds on~$h_k$ (\Cref{sec:lower-bisection,sec:upper-bisection}).
	We then present in \Cref{sec:bisect2mc}, how to transform the bisection problem
	into an instance of a max-cut problem which is then solved using the
	state-of-the-art solver BiqBin~\cite{biqbin}. For completeness, a description
	of BiqBin is given in \Cref{sec:biqbin}.
	
	\subsection{SDP lower bounds for the bisection problem}\label{sec:lower-bisection}
	
	After squaring the linear equality constraint in problem~\cref{eq:formulationExact-fixedk} and employing standard lifting and relaxation techniques, we obtain the following SDP relaxation that is generally computationally cheap to solve,
	\begin{equation}\label{eq:cheapsdp}
	\begin{array}{rll}
	\ell_{\text{bisect}}(k) =& \min_{\mat X,\vec x} \ & \langle \mat L, \mat X \rangle \\[0.2em]
	&\text{s.t.} &  \trace(\mat X)  = k\\
	&& \langle \mat{J}, \mat X \rangle = k^2\\
	&& \diag(\mat X) = x\\
	&& \begin{pmatrix}
	1 & &\vec x^\top \\
	\vec x & & \mat X
	\end{pmatrix} \succeq 0.
	\end{array}
	\end{equation}
	Since the bisection for a given simple unweighted graph has to be an integer, we get the following lower bound on the scaled bisection~$h_k$,
	\begin{equation}	\label{eq:cheaplb}
	h_{k} \ge \ell_k = \frac{\lceil \ell_{\text{bisect}}(k) \rceil}{k} .
	\end{equation}

	There are several ways to strengthen the above relaxation of the bisection
	problem.
	In~\cite{wokowicz-zhao1999} a vector lifting SDP relaxation,
	tightened by non-negativity constraints, has been introduced.
	In our setting, this results in the following doubly non-negative
	programming (DNN) problem,
	\begin{align}\label{eq:dnn}
	\begin{split}
	\min_{\mat X} ~~&\inprod{L}{\mat X^{11}+\mat X^{22}} \\
	\textrm{s.t.}~~&\trace (\mat X^{11}) = k, ~ \inprod{\mat J}{\mat X^{11}} = k^{2} \\
	&\trace (\mat X^{22}) = n-k, ~ \inprod{\mat J}{\mat X^{22}} = (n-k)^{2} \\
	&\diag(\mat X^{12})=\vec{0},~
	\diag(\mat X^{21})=\vec{0},
	~ \inprod{\mat J}{\mat X^{12}+\mat X^{21}} = 2k(n-k) \\
	& 
	\mat X= \begin{pmatrix}
	1 & (\vec x^1)^\top & (\vec x^2)^\top \\
	\vec x^1 & \mat X^{11} & \mat X^{12} \\
	\vec x^2 & \mat X^{21} & \mat X^{22} \\
	\end{pmatrix} \succeq {0},
	~~\vec x^i=\diag(\mat X^{ii}), ~~ i=1,2\\
	&\mat X \ge 0,
	\end{split}
	\end{align}
	where $\mat X$ is a matrix of size $(2n+1) \times (2n+1)$. This relaxation can be further 
	strengthened by cutting planes from the Boolean Quadric
	Polytope. In particular, we want to add the inequalities
	\begin{equation}\label{eq:cuttingplane}
	X_{i\ell} +  X_{j\ell} \le  X_{\ell\ell} +  X_{ij}
	\end{equation}
	as Meijer et al.~\cite{Meijer2023} demonstrated that these
	inequalities are the most promising ones to improve the bound.
	
	The DNN relaxation~\cref{eq:dnn} cannot be solved by standard
	methods due to the large number of
	constraints. The additional cutting-planes~\cref{eq:cuttingplane}
	make the SDP relaxation extremely difficult to solve already for
	medium-sized instances. Meijer et al.~\cite{Meijer2023} apply facial reduction to the SDP
	relaxation which leads to a natural way of splitting the set of
	variables into two blocks. 
	Using an alternating direction method of multipliers (ADMM) provides
	(approximate) solutions to this relaxation even for 
	graphs with up to~1000~vertices.
	The steps to be performed in this ADMM algorithm result in projections
	onto the respective feasible sets. For projections onto polyhedra,
	Dykstra's projection algorithm is used.
	A careful selection of non-overlapping cuts, warm starts, and an
	intelligent separation routine are further ingredients of this
	algorithm in order to obtain an efficient solver for the
	SDP~\cref{eq:dnn} enhanced with inequalities~\cref{eq:cuttingplane}.
	A post-processing algorithm is also introduced to guarantee a valid lower bound. Using this algorithm, we can compute strong lower bounds for each $k$ with reasonable computational effort.

	\subsection{A heuristic for the bisection problem}\label{sec:upper-bisection}
	The graph bisection problem can also be written as a quadratic assignment problem (QAP)~\cite{deklerk2012bisect}. 
	To do so, we set the weight matrix~$\mat W$ to be the Laplacian matrix $\mat L$ of the graph and the distance matrix~$\widetilde{\mat D}$ to be a matrix with a top left block of size $k$ with all ones and the rest zero. 
	The resulting QAP for this weight and distance matrix is
	\begin{equation*}
	\min_{\pi \in \Pi_n} \  \sum_{i = 1}^n \sum_{j=1}^n W_{i,j} \widetilde D_{\pi(i),\pi(j)} = \min_{\pi \in \Pi_n} \  \sum_{i = 1}^k \sum_{j=1}^{k}  L_{\pi^{-1}(i),\pi^{-1}(j)} = kh_k.
	\end{equation*}
	In this formulation, the vertices mapped to values between~1 and~$k$ by the permutation~$\pi$ are chosen to be in the set of size~$k$ in the partition. To compute an upper bound~$u_k$ on~$h_k$, we can use any heuristic for the QAP and divide the
	solution by~$k$.
	
	Simulated annealing is a well-known heuristic to compute an upper bound for the QAP, we implement the algorithm introduced in~\cite{Burkard1984}. 
	We use a slightly different parameter setting that we determined via numerical experiments. That is, the initial temperature is set to $\nicefrac{k^2}{\binom{n}{2}}\cdot \operatorname{tr}(\mat L)$, the number of transformation trials at constant temperature is initially set to~$n$ and increased by a factor of~1.15 after each cycle, and the cooling factor is set to~0.7. 
	After every trial, we additionally perform a local search strategy to find the local minimum.

	Other well-performing heuristics for the QAP are tabu search, genetic algorithms, or algorithms based on the solution
	of the SDP relaxation like the hyperplane rounding algorithm.
	Some of these heuristics have the potential to be superior to
	simulated annealing.
	However, as we will see later in the study of our experiments, the
	bounds we obtain with the simulated annealing heuristic are almost
	always optimal for the size of instances we are interested in.

	\subsection{Transformation to a max-cut problem}\label{sec:bisect2mc}
	One variant to solve the $k$-bisection problem is to implement
	a branch-and-bound algorithm with the aforementioned lower and upper bounds
	as presented in~\cite{isco-paper}.
	We are going to use
	a different approach to solving the graph bisection problem. Namely, we
	transform it to a max-cut problem and then take advantage of  using a
	well-established and performant
	max-cut solver, e.g. the open source parallel solver BiqBin~\cite{biqbin}, see also \Cref{sec:biqbin}.
	To do so, we first need to transform the bisection problem into
	a quadratic unconstrained binary problem (QUBO). 
	
	\begin{lemma}
		Let $\tilde{\vec x} \in\{0,1\}^n$ such that $\vec e^\top \tilde{\vec x} = k$, 
		and choose $\mu_k$ such that $\mu_k > \tilde{\vec x}^\top \mat L\tilde{\vec x}$.
		Then 
		\[
		h_k = \frac{1}{k}\min_{\vec x} \;\big\{\vec x^\top (\mat L + \mu_k \mat{J}) \vec x -2\mu_k k\, \vec e^\top \vec x + \mu_k k^2: \vec x \in \{0,1\}^n\big\} .
		\]
	\end{lemma}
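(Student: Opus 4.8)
The plan is to recognize the QUBO objective as the bisection objective of \cref{eq:formulationExact-fixedk} augmented by an exact penalty term, and then argue that $\mu_k$ is chosen large enough to force any unconstrained minimizer to obey the cardinality constraint $\vec e^\top \vec x = k$.

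First I would simplify the objective. Since $\vec x^\top \mat{J}\vec x = (\vec e^\top \vec x)^2$, the expression inside the minimum equals
\[
\vec x^\top \mat L \vec x + \mu_k\big((\vec e^\top \vec x)^2 - 2k\,\vec e^\top \vec x + k^2\big) = \vec x^\top \mat L \vec x + \mu_k\,(\vec e^\top \vec x - k)^2 =: f(\vec x).
\]
On the feasible set $\{\vec x \in \{0,1\}^n : \vec e^\top \vec x = k\}$ of the bisection problem the penalty vanishes and $f(\vec x) = \vec x^\top \mat L \vec x$; this set is nonempty since it contains $\tilde{\vec x}$, so minimizing over it gives $\min_{\vec x\in\{0,1\}^n} f(\vec x) \le k h_k$.

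Next I would establish the reverse inequality by showing that any minimizer $\vec x^\star$ of $f$ over $\{0,1\}^n$ satisfies $\vec e^\top \vec x^\star = k$. Suppose not; since $\vec e^\top \vec x^\star$ and $k$ are integers, $(\vec e^\top \vec x^\star - k)^2 \ge 1$, and using $\mat L \succeq 0$ (hence $(\vec x^\star)^\top \mat L \vec x^\star \ge 0$) we get $f(\vec x^\star) \ge \mu_k$. But $f(\tilde{\vec x}) = \tilde{\vec x}^\top \mat L \tilde{\vec x} < \mu_k$ by the choice of $\mu_k$, contradicting optimality of $\vec x^\star$. Hence $\vec e^\top \vec x^\star = k$, so $f(\vec x^\star) = (\vec x^\star)^\top \mat L \vec x^\star \ge k h_k$; combined with the previous step this yields $\min_{\vec x\in\{0,1\}^n} f(\vec x) = k h_k$, and dividing by $k$ gives the claim.

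I do not anticipate a genuine obstacle; the only points needing (minor) care are the algebraic identity $\vec x^\top \mat{J} \vec x = (\vec e^\top \vec x)^2$ and the two facts that make the penalty exact — integrality of $\vec e^\top \vec x$ on $\{0,1\}^n$ (so the penalty jumps by at least $\mu_k$ once the constraint is violated) and $\mat L \succeq 0$ (so the quadratic part is nonnegative and cannot offset the penalty). The hypothesis $\mu_k > \tilde{\vec x}^\top \mat L \tilde{\vec x}$ is precisely what certifies that the feasible point $\tilde{\vec x}$ beats every infeasible point.
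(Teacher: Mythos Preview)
Your proof is correct and follows essentially the same approach as the paper's: rewrite the QUBO objective as $\vec x^\top \mat L \vec x + \mu_k(\vec e^\top \vec x - k)^2$, then use integrality of $\vec e^\top \vec x$ together with $\mat L \succeq 0$ to conclude that any infeasible $\vec x$ has objective at least $\mu_k > \tilde{\vec x}^\top \mat L \tilde{\vec x}$. Your write-up is in fact a bit more explicit (stating both inequalities and invoking $\mat L \succeq 0$ directly), but the argument is the same.
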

	\begin{proof}
		First note that \[ \vec x^\top (\mat L + \mu_k \vec e\vec e^\top)\vec x - 2\mu_k
		k\,\vec e^\top \vec x + \mu_k k^2 = \vec x^\top \mat L\vec x + \mu_k \|e^\top \vec x -
		k\|^2.\]
		Let $\vec x\in \{0,1\}^n$. Then we have 
		\begin{align*}
		&\vec x^\top \mat L\vec x + \mu_k \|\vec e^\top \vec x - k\|^2 = \vec x^\top \mat L\vec x &\text{ if } \vec e^\top\vec x = k,\\
		&\vec x^\top \mat L\vec x + \mu_k \|\vec e^\top \vec x - k\|^2 \ge \mu_k &\text{ if } \vec e^\top\vec x \not= k.
		\end{align*}
		Note that $\vec e^\top \vec x - k$ is integer for $\vec x \in \{0,1\}^n$.
		Hence, for any infeasible~$\vec x \in \{0,1\}^n$, the objective is greater
		than the given upper bound $\tilde{\vec x}^\top \mat L\tilde{\vec x}$, and therefore the minimum can only
		be attained for $\vec x \in \{0,1\}^n$ with $\vec e^\top \vec x = k$.
	\end{proof}
	
	Barahona et al.~\cite{Barahona1989} showed that any QUBO problem can be reduced to a max-cut problem by adding one additional binary variable. 
	In our context, this means the following. 
	
	\begin{corollary}
		Let $G=(V,E)$ and let $G'$ be the complete graph with vertex set
		$V \cup \{v_0\}$. Let the weights $c_{uw}$ on the edges of $G'$ be as follows.
		\begin{equation*}
		c_{uw} = \begin{cases}
		4\mu_k(n-2k) & \text{ if } u\in V(G) \text{ and } w = v_0\\
		4\mu_k - 1 & \text{ if } uw \in E(G)\\
		4\mu_k &\text{ if } uw \not\in E(G)
		\end{cases}
		\end{equation*}
		Then the minimum bisection of $G$ where one side of the cut has size~$k$ is
		equal to $\mathtt{offset} - \operatorname{max-cut}(G')$,
		where $\mathtt{offset} = 4\mu_k (n-k)^2$.
	\end{corollary}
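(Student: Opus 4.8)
The plan is to start from the QUBO formulation in the preceding lemma, homogenize it by introducing the auxiliary vertex $v_0$, and then recognize the resulting homogeneous quadratic as (a constant minus) a cut value in the complete graph $G'$. Concretely, the lemma gives
\[
k h_k = \min_{\vec x \in \{0,1\}^n} \vec x^\top(\mat L + \mu_k \mat J)\vec x - 2\mu_k k\, \vec e^\top \vec x + \mu_k k^2.
\]
First I would follow Barahona et al.~\cite{Barahona1989}: replace each $x_u \in \{0,1\}$ by a $\pm 1$ variable $z_u$ via $x_u = (1 - z_u z_0)/2$ for a new variable $z_0 \in \{-1,1\}$; since the substitution is invariant under flipping all signs, we may fix $z_0 = 1$ without loss of generality, and then $x_u \in \{0,1\}$ is recovered. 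Substituting and expanding turns the (inhomogeneous) QUBO into a homogeneous quadratic form $\sum_{u,w} q_{uw} z_u z_w$ over $z \in \{-1,1\}^{V \cup \{v_0\}}$, where the linear terms $-2\mu_k k\,\vec e^\top \vec x$ become the $z_u z_0$ cross terms and the constant $\mu_k k^2$ is absorbed partly into $q_{v_0 v_0}$, partly into a scalar offset.

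The second step is the standard identity relating a homogeneous $\pm 1$ quadratic form to a cut: for any weighted graph with edge weights $c_{uw}$ on vertex set $W$, $\sum_{uw} c_{uw}\,\frac{1 - z_u z_w}{2}$ is exactly the value of the cut induced by $z$, so $\sum_{uw} c_{uw} z_u z_w = (\sum_{uw} c_{uw}) - 2\,\mathrm{cut}(z)$. I would match coefficients: the off-diagonal entries $q_{uw}$ of the quadratic form (for $u,w \in V$, $u \neq w$) come only from $\mat L + \mu_k \mat J$, giving $\mu_k - 1$ on edges of $G$ and $\mu_k$ on non-edges, while the $q_{u v_0}$ entries come from the combination of the $\mat J$ block and the linear term and should work out to $\mu_k(n - 2k)$ per edge to $v_0$. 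Multiplying the whole quadratic by the scalar $-2$ (to flip the sign so that minimizing the QUBO becomes maximizing a cut) accounts for the factor $4$ appearing in all the weights $c_{uw}$ in the statement, and the diagonal contributions $\sum_u q_{uu}$ together with the leftover constant assemble into $\mathtt{offset} = 4\mu_k(n-k)^2$. Then $k h_k = \mathtt{offset} - \operatorname{max-cut}(G')$, which is exactly the claim (the scaling by $1/k$ to get $h_k$ itself is left implicit in the statement, as it refers to the minimum bisection value $k h_k$).

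The main obstacle is purely bookkeeping: carrying through the $x_u = (1 - z_u z_0)/2$ substitution in the full quadratic $\vec x^\top(\mat L + \mu_k\mat J)\vec x - 2\mu_k k\,\vec e^\top\vec x + \mu_k k^2$ and correctly sorting the resulting terms into (i) $z_u z_w$ terms for $u,w \in V$, (ii) $z_u z_0$ terms, (iii) diagonal/constant terms, and then tracking how the overall factor of $-2$ and the diagonal terms combine into the single offset $4\mu_k(n-k)^2$. A small subtlety worth checking carefully is that $\mat L$ contributes nothing to the $v_0$-edges (since $\mat L \vec e = \vec 0$), so the $\mu_k(n-2k)$ coefficient on $c_{u v_0}$ must come entirely from the $\mu_k\mat J$ term ($\mu_k\cdot$ (something) from $\vec x^\top \mat J \vec x$) minus the linear term $-2\mu_k k$; verifying that this equals $\mu_k(n - 2k)$ after accounting for how $\mat J$'s off-diagonal mass splits between $V$–$V$ and $V$–$v_0$ pairs under the substitution is the one place where an error could slip in. No genuine mathematical difficulty is expected beyond this careful coefficient-matching, since both ingredients (QUBO-to-max-cut reduction and the cut/quadratic-form identity) are classical.
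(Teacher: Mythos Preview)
Your approach is exactly the one the paper intends: the corollary is stated without proof, as an immediate application of the Barahona et al.\ reduction to the QUBO from the preceding lemma, and your plan (homogenize via $z_0$, rewrite as a $\pm 1$ quadratic form, match against the cut identity) is precisely how that reduction is carried out.

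There is one point where your bookkeeping explanation goes astray. You write that ``multiplying the whole quadratic by the scalar $-2$ \ldots\ accounts for the factor $4$ appearing in all the weights.'' That cannot be right: a uniform scaling of the QUBO would turn the edge weight into $4(\mu_k - 1) = 4\mu_k - 4$, not the $4\mu_k - 1$ in the statement. If you actually carry out the substitution $x_u = (1+z_u)/2$ in the QUBO with penalty $\mu_k$, you obtain weights $\mu_k - 1$ (edges), $\mu_k$ (non-edges), $\mu_k(n-2k)$ (to $v_0$) and offset $\mu_k(n-k)^2$. The corollary's weights are what you get from this same computation with penalty $4\mu_k$ in place of $\mu_k$; this is still a valid penalty (since $4\mu_k > \mu_k > \tilde{\vec x}^\top L \tilde{\vec x}$), and it is chosen so that taking $\mu_k$ to be an integer plus $\tfrac14$ yields integer edge weights, as the paper remarks immediately after the corollary. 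So the factor $4$ multiplies only the $\mu_k$-dependent part, not the Laplacian contribution, and this is exactly why the edge weight is $4\mu_k - 1$ rather than $4\mu_k - 4$. Once you make this correction, the coefficient-matching goes through cleanly and the offset comes out to $4\mu_k(n-k)^2$ as stated.
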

	
	Since max-cut solvers can benefit from edge weights of the input
	graph being integer, a possible choice for~$\mu_k$ is an upper bound on the
	bisection problem plus~$\nicefrac 1 4$.
	Note that we choose~$\mu_k$ to be as small as possible by doing so.
	
	The formulation of the max-cut problem in $\pm 1$ variables additionally
	requires~$x_{v_0} = 1$ to hold. Because of the symmetry of the cut, we can omit this constraint. Due to our choice of the penalty parameter, it holds that on one side of
	the maximum cut, there are exactly~$k+1$ vertices, including vertex~$v_0$.
	These~$k$ vertices on the same side as~$v_0$
	are the vertices in the set of size~$k$ in the optimum of the bisection problem.
	
	To summarize, we can solve the bisection problem by solving a dense max-cut
	problem. With this, we now have all the tools needed for our new split \& bound approach. 
	
	\section{Split \& bound}
	\label{sec:split-and-bound}
	We now assemble the tools developed in \Cref{sec:fixingk} 
	to compute the edge expansion of a graph by splitting
	the problem into~$\lfloor \frac n 2 \rfloor$ many bisection problems.
	Since the bisection problem is NP-hard as well, we want to reduce the
	number of bisection problems we have to solve exactly as much as
	possible. To do so, we start with a pre-elimination of the
	bisection problems.
	This procedure aims to exclude subproblems unnecessary for
	the computation of the edge expansion of the graph.
	Computing the edge expansion by considering the remaining values of $k$ is summarized in
	Algorithm~\ref{alg:split-and-bound} below.
	We now explain the pre-elimination step and further ingredients of our
	algorithm.  
	
	\subsection{Pre-elimination}
	The size~$k$ of the smaller set of the partition can theoretically
	be any value from $1$ to~$\lfloor \frac{n}{2}\rfloor$.
	However, it can be expected that for some candidates, one can quickly
	check that the optimal solution cannot be attained for that $k$.
	As a first quick check, we use the cheap lower bound~$\ell_k$ obtained
	by solving the SDP~\cref{eq:cheapsdp} 
	in combination with the upper bound introduced in the
	\Cref{sec:upper-bisection}.
	We do not need to further consider values of~$k$ where the
	lower bound~$\ell_k$ of the scaled bisection problem is
	already above an upper bound on the edge expansion.
	A pseudo-code of this pre-elimination step
	is given in Algorithm~\ref{alg:pre-elimination}.
	
	\begin{algorithm} \label{alg:pre-elimination}
		\For{$k \in \{1,\dots, \lfloor \frac{n}{2}\rfloor\}$}{
			Compute an upper bound $u_k$ using the heuristic from \Cref{sec:upper-bisection}\;
			Compute the lower bound $\ell_k$ from \cref{eq:cheaplb} by solving the cheap SDP~\cref{eq:cheapsdp}\;}
		Global upper bound $u^* \coloneqq \min \big\{ u_k\colon 1 \leq k \leq \lfloor \frac{n}{2}\rfloor\big\}$\;
		\eIf{$\min_{k} \ell_k = u^*$}{$\mathcal{I} = \emptyset$, $h(G) = u^*$\;}
		{
			$\mathcal{I} \coloneqq \big\{ k \in \{ 1,\dots,\lfloor
			\frac{n}{2}\rfloor\} \colon \ell_k < u^* \big\}$\;}
		\Return $\mathcal{I}$, $u_k$ for $k \in \mathcal{I}$, $u^*$
		\caption{Pre-eliminate certain values of $k$}
	\end{algorithm}
	
	The hope is that many values of
	$k$ can be excluded from computing the edge expansion.
	Clearly, this heavily depends on the instance itself, as in the
	worst case, it might happen that for many different values of~$k$, the value of~$h_k$
	is close to the optimum.
	
	We can further reduce the number of candidates for $k$ by computing a tighter
	lower bound~$\tilde{\ell}_k$ by solving the DNN relaxation~\cref{eq:dnn} with additional cutting
	planes. In our implementation we do not compute 
	$\tilde{\ell}_k$ 
	as part of the pre-elimination but use the
	lower bound obtained from the solution in the root node of the max-cut solver.
	
	\paragraph{Impact of pre-elimination on sample instances}
	\Cref{fig:bounds-each-k-grevlex,fig:bounds-each-k} display the bounds associated with four different graphs. For the graph
	of the grevlex polytope in dimension~7, considering the bounds $u_k$
	and $\tilde{\ell}_k$ the only candidates for $k$
	where the optimal solution can be found are 12 and 14.
	For the grevlex polytope in dimension~8, the sizes~17 and~18 remain as
	the only candidates. Also for a graph associated to a randomly
	generated 0/1-polytope and to a network graph, about 2/3 of the
	potential values of $k$ can be excluded already by considering the
	cheap lower bound $\ell_k$.
	
	\begin{figure}[h!]
		\centering
		\includegraphics[width=0.95\linewidth]{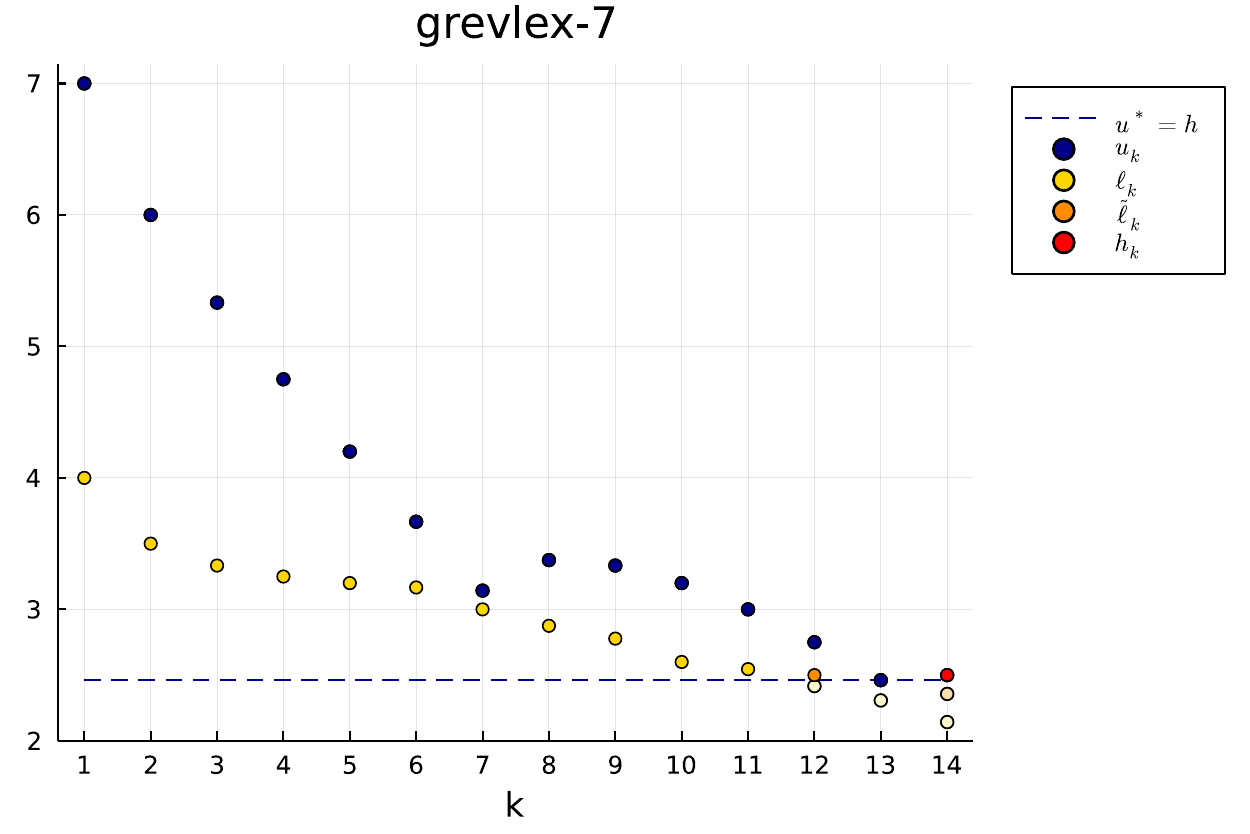}
		\includegraphics[width=0.95\linewidth]{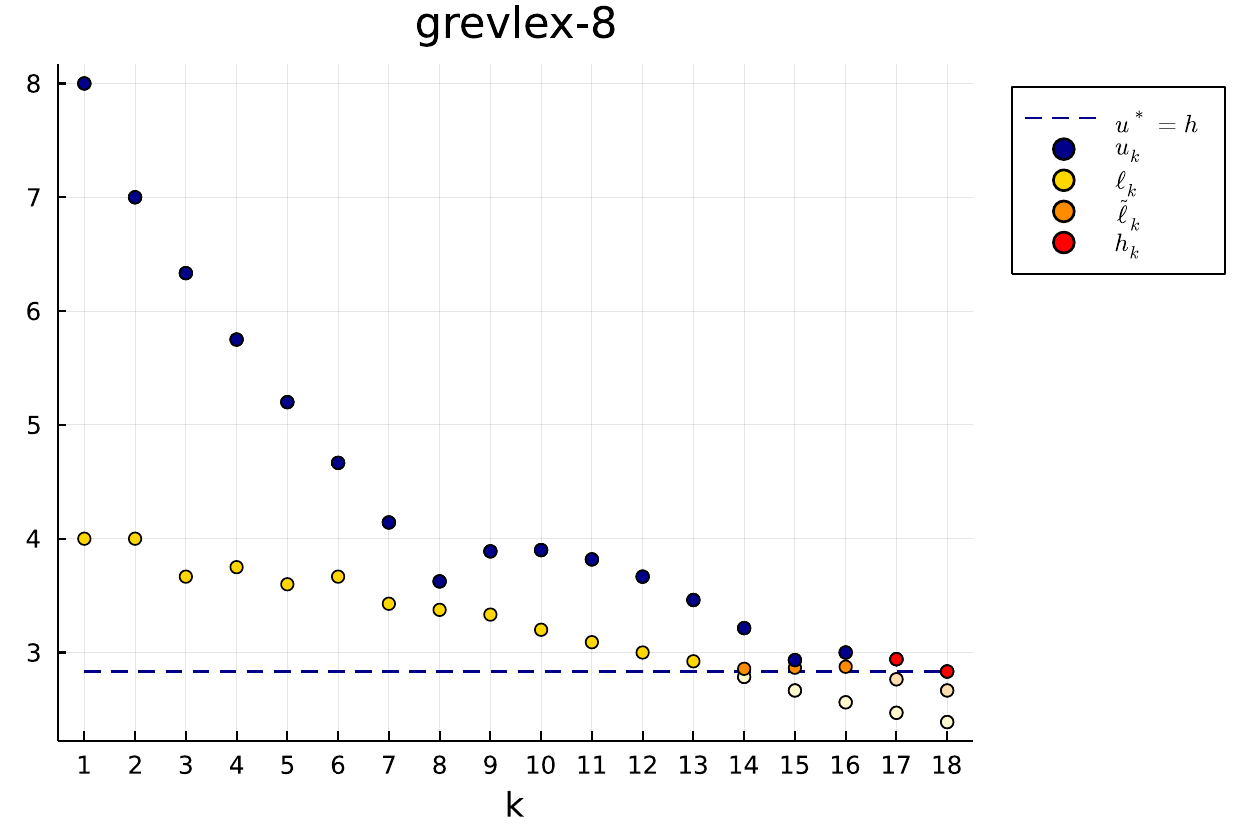}
		\caption{Lower and upper bounds for each $k$.}
		\label{fig:bounds-each-k-grevlex}
	\end{figure}
	\begin{figure}[h!]
		\includegraphics[width=0.95\linewidth]{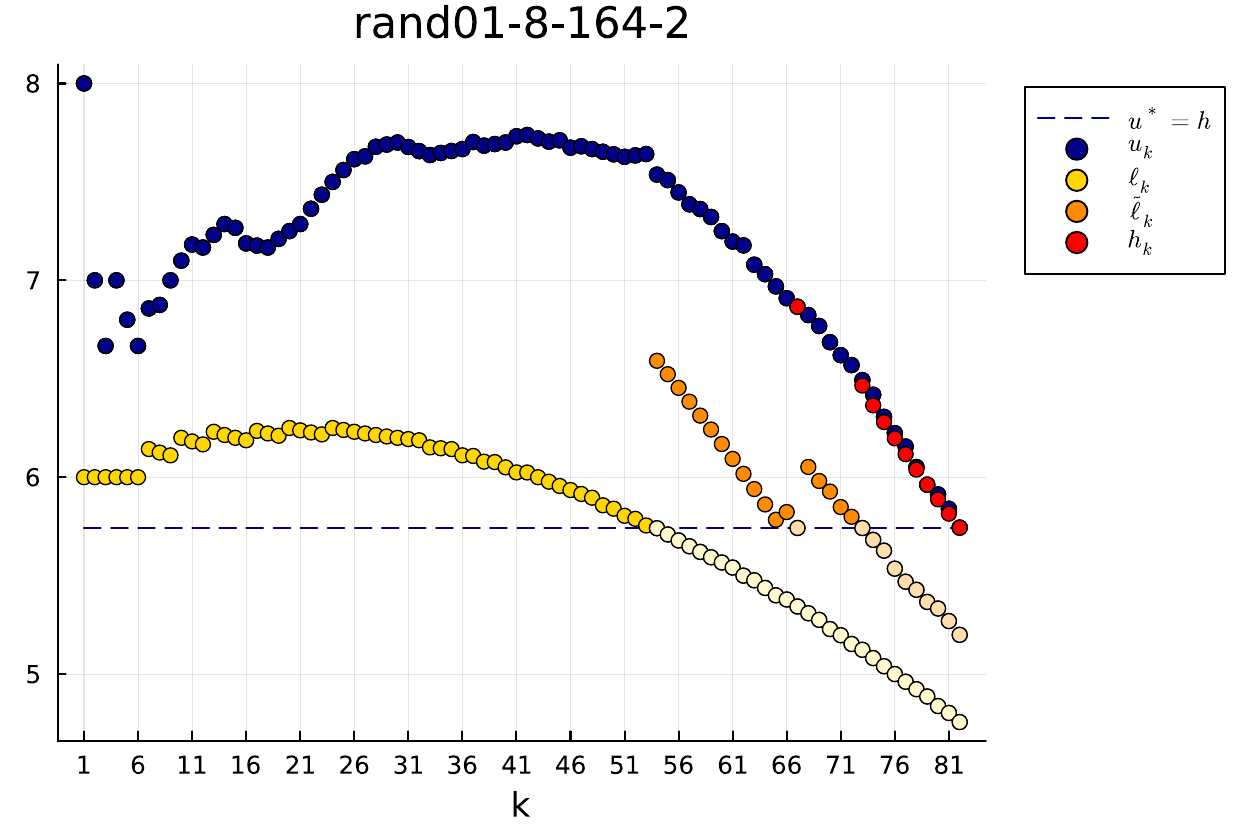}
		\includegraphics[width=0.95\linewidth]{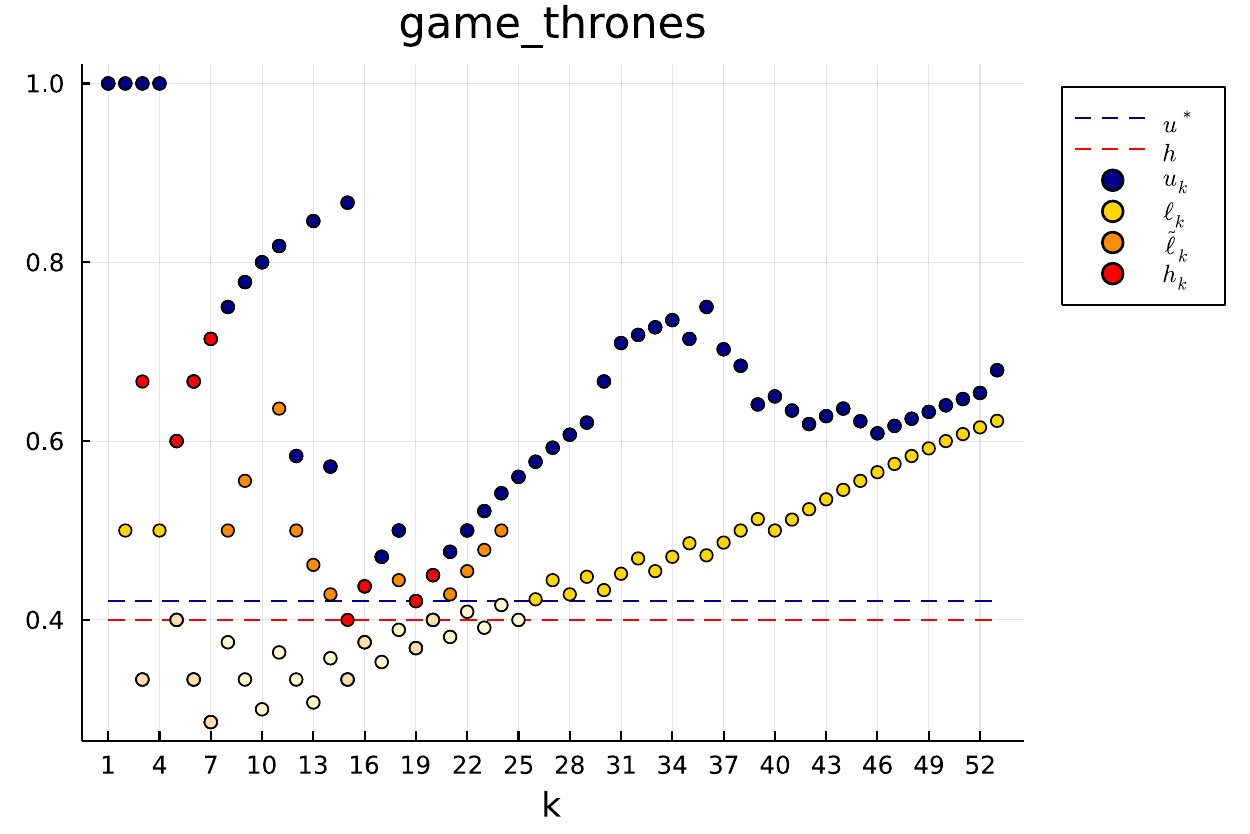}
		\caption{Lower and upper bounds for each $k$.}
		\label{fig:bounds-each-k}
	\end{figure}

	\subsection{Stopping exact computations early and updating $u^*$}
	All values of~$k$ that are not excluded in the pre-elimination step
	have to be further examined. 
	For those values we continue to run the max-cut solver
	to compute the scaled bisection~$h_k$. We can stop the
	branch-and-bound algorithm as soon as the lower bound (on the scaled $k$-bisection problem)
	of all open nodes in the branch-and-bound tree is larger or equal to~$u^*$.
	(Remember that~$u^*$ is an upper bound on the edge expansion of the graph
	but not necessarily an upper bound on $h_k$.)
	A simple way to implement this stopping criterion
	is to initialize the algorithm for the $k$-bisection
	problem with $\lceil u^* k \rceil$ as an
	``artificial'' upper bound.
	For the max-cut solver, this translates to an ``artificial'' lower bound
	of~$\mathtt{offset} - \lceil u^* k \rceil$.
	
	In case $h_k < u^*$, we can update $u^*$ which might lead to
	eliminating further values from~$\mathcal{I}$.
	This fact is also part of the motivation for the order of choosing~$k$ for
	computing~$h_k$, as described in the next section.

	\subsection{Order of selecting values $k$ from $\mathcal{I}$}
	We consider the order of computing $h_k$ in ascending order based on their
	upper bounds~$u_k$. The motivation for this choice is as follows.
	
	Remember that $u^* = \min_k u_k$ is a global upper bound on the edge
	expansion. The most promising values for $k$ to even further improve
	this bound are those with small~$u_k$. 
	Therefore, before starting the max-cut solver for the
	values $k$ left after pre-elimination, we do another 30~trials of
	simulated annealing for each of these 
	to hopefully further improve the upper bound.
	
	Moreover, we run the exact computation of $h_k$ in this order since also
	during the branch-and-bound algorithm, the upper bound~$u_k$ might drop
	further and this will improve the global upper bound $u^*$ most likely
	for those candidates with small $u_k$.
	
	An improvement of the upper bound~$u^*$ means that there is a
	possibility to further eliminate values $k$ from $\mathcal I$.
	But even for values $k$ that can not be eliminated, we obtain
	smaller artificial upper bounds and hence the computation of these
	bisection problems may be stopped earlier.
	
	We summarize all the steps in Algorithm~\ref{alg:split-and-bound}.

	\begin{algorithm} \label{alg:split-and-bound}
		$\mathcal{I}$, $u_k$ for $k \in \mathcal{I}$, $u^*$ $\leftarrow$ pre-elimination Algorithm~\ref{alg:pre-elimination}\;
		\For{$k \in \mathcal{I}$}{
			Run heuristic from \Cref{sec:upper-bisection} and update $u_k$\;
			\If{$u_k < u^*$}{$u^* \leftarrow u_k$\; update $\mathcal{I}$\;}}
		\For{$k \in \mathcal{I}$, consider $k$ in ascending order of $u_k$}{
			transform the instance to a max-cut instance\;
			compute $h_k$ using the max-cut solver, initialize the lower bound for max-cut as $\mathtt{offset} - \lceil u^*k \rceil$\;
			\If{$h_k < u^*$}{$u^* \leftarrow h_k$\; update $\mathcal{I}$}}
		$h(G) = u^*$\;
		\caption{Split \& bound}
	\end{algorithm}

	\subsection{Algorithmic verification of lower bound}
	We close this section by addressing the important consideration that we are not interested in the exact value of
	the edge expansion in some applications, but want to check whether certain values are valid lower bounds on $\h$.  A lower bound $c \le \h$, for some constant $c>0$, means that the graph is a $c$-expander. The value of this lower bound means that the graph expands by at least that much. This also arises in the context of the Mihail-Vazirani conjecture on 0/1-polytopes where one wants to check whether~$\h \ge 1$ where~$G$ is the graph of a 0/1-polytope. 
	
	Our split \& bound algorithm can also be used to verify a lower bound.
	
	\begin{proposition}
		Let $\upsilon$ be a given scalar and suppose we initialise Algorithm~\ref{alg:split-and-bound} with $u^{\ast} = \upsilon$. Then $\upsilon$ is a valid lower bound on $\h$ if and only if the algorithm terminates without updating $u^{\ast}$. 
	\end{proposition}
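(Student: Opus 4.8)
The proof is essentially a matter of unwinding how the algorithm uses $u^{\ast}$. First I would recall the structure of Algorithm~\ref{alg:split-and-bound}: it starts from a value $u^{\ast}$, runs pre-elimination (which only ever \emph{decreases} $u^{\ast}$ by taking a minimum with heuristic values $u_k$), then for the surviving $k\in\mathcal I$ it either lowers $u^{\ast}$ via an improved heuristic bound $u_k$, or computes $h_k$ exactly via the max-cut solver and lowers $u^{\ast}$ to $h_k$ whenever $h_k<u^{\ast}$. At termination it returns $\h=u^{\ast}$. The key observation is that $u^{\ast}$ is updated \emph{only} when the algorithm discovers a witness — either a feasible bisection (heuristic solution of some $k$-bisection) or an exactly-computed $h_k$ — that is strictly smaller than the current $u^{\ast}$. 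Every such witness is an actual attainable value of $\abs{\partial S}/\abs S$ for some admissible $S$, hence an upper bound on $\h$.

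\textbf{Forward direction (contrapositive).} Suppose the algorithm updates $u^{\ast}$ at some point. Then at that moment it has exhibited a concrete value $v' < \upsilon$ with $v'$ equal to $u_k$ (value of an actual bisection of size $k$) or to an exactly computed $h_k$; in either case $\h \le v' < \upsilon$, so $\upsilon$ is \emph{not} a valid lower bound. Contrapositively, if $\upsilon$ is a valid lower bound on $\h$, the algorithm never updates $u^{\ast}$.

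\textbf{Reverse direction.} Suppose the algorithm terminates without ever updating $u^{\ast}$, so it returns $\h = u^{\ast} = \upsilon$. I must argue this returned value is correct, i.e. that no admissible $S$ achieves a ratio below $\upsilon$. Here I would invoke the correctness of the split \& bound procedure itself, which I may take as established by the earlier development: for each $k$ that is \emph{not} pre-eliminated, the max-cut solver (with the artificial lower bound $\mathtt{offset}-\lceil u^{\ast}k\rceil$) either computes $h_k$ exactly or certifies $h_k\ge u^{\ast}$; and each pre-eliminated $k$ satisfies $\ell_k \ge u^{\ast}$ with $\ell_k$ a valid lower bound on $h_k$ (from \cref{eq:cheaplb}). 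Since $u^{\ast}$ was never updated, it equals its initial value $\upsilon$ throughout, so for every $k\in\{1,\dots,\lfloor n/2\rfloor\}$ we have $h_k \ge \upsilon$. As $\h = \min_k h_k$, this gives $\h \ge \upsilon$, i.e. $\upsilon$ is a valid lower bound.

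\textbf{Main obstacle.} The delicate point is the reverse direction: one must be careful that initializing with an \emph{artificial} $u^{\ast}=\upsilon$ (rather than a genuine upper bound) does not cause the algorithm to terminate prematurely with an unjustified conclusion. The resolution is that the stopping rule for each $k$-bisection — pruning when all open nodes have lower bound $\ge u^{\ast}$ — is exactly the statement ``$h_k\ge u^{\ast}$'', which remains a sound certificate regardless of whether $u^{\ast}$ is attained; and pre-elimination of $k$ is justified by $\ell_k\ge u^{\ast}$, again a sound lower-bound comparison. So no branch is ever discarded without a valid certificate that it cannot beat $\upsilon$, and the argument goes through. I would close by noting the two directions together give the ``if and only if''.
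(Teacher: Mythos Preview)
Your proposal is correct and follows the same two-direction argument as the paper's own proof, which is in fact much terser (essentially one sentence per direction). Your write-up is more careful: you make explicit that every update of $u^{\ast}$ is triggered by an \emph{attained} cut ratio (hence a genuine upper bound on $\h$), and you spell out for the reverse direction why each $k$ ends up certified to satisfy $h_k\ge\upsilon$ --- either via the pre-elimination bound $\ell_k$ or via the pruning certificate in the max-cut solver. The paper leaves this implicit, and in particular does not explicitly address the point you flag as the ``main obstacle'' (that the artificial $u^{\ast}=\upsilon$ need not be a true upper bound, yet the pruning remains sound because it rests on valid lower-bound comparisons). So your argument is the same in structure but fills in the justification the paper omits.
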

	\begin{proof}
		Assume we initialize $u^{\ast} = \upsilon$. 
		If we find a better upper bound (or some computed value~$h_k$
		is smaller than $\upsilon$), this is a certificate that
		the given value~$\upsilon$ is not a valid lower bound since we found
		a better solution.
		Otherwise, if the upper bound never gets updated, this means
		the provided bound is indeed a valid lower bound on the edge expansion
		of the graph.
	\end{proof}

	\section{Parametric optimization}
	\label{sec:dinkelbach}
	Another approach to compute $\h$ is following a discrete
	Newton-Dinkelbach algorithm.
	Dinkelbach~\cite{dinkelbach1967nonlinear} gave a general classical framework to solve (non)-linear fractional programs. 
	The program one aims to solve is $\min_{\vec x \in \F}\, f(\vec x)$, where the objective~$f$
	is a fraction of (non)-linear functions.
	In our case this is
	\begin{subequations}
		\begin{equation*}
		f(\vec x) = \frac{\vec x^\top \mat L \vec x}{\vec e^\top \vec x}, \quad \text{and} \quad \mathcal F = \Big\{\vec x \in \{0,1\}^n : 1 \leq \vec e^\top \vec x \leq \frac n 2 \Big\} .  
		\end{equation*}
		The main component of the algorithm is to form the following parametrized objective function
		\begin{equation*}
		g_{\gamma}(\vec x) = \vec x^\top \mat L \vec x - \gamma \vec e^\top \vec x,
		\end{equation*}
		and the corresponding parametrized optimization problem
		\begin{equation*}
		P(\gamma) = \min_{\vec x}\; \{ g_{\gamma}(\vec x): \vec x \in \F\}, \qquad \gamma \ge 0 .
		\end{equation*}
	\end{subequations}
	This problem then has the following useful properties. 

	\begin{proposition} \label{prop:dinkelbach-sign}
		$P(0) = \mincut$ and $P$ is a strictly decreasing concave piecewise linear function over $\real_{+}$ whose unique root is equal to $h(G)$. Consequently,
		\[
		\h = \max_\gamma \big\{\gamma \colon g_{\gamma}(\vec x) \ge 0  \big\} = \min_\gamma \big\{\gamma \colon g_{\gamma}(\vec x) \le 0  \big\} .
		\]
	\end{proposition}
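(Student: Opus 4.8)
The plan is to read off every claimed property from the fact that, by construction, $P$ is the pointwise minimum of a \emph{finite} family of affine functions of $\gamma$. For each fixed $\vec x\in\F$ the map $\gamma\mapsto g_{\gamma}(\vec x)=\vec x^\top\mat L\vec x-\gamma\,\vec e^\top\vec x$ is affine with slope $-\vec e^\top\vec x$, and since $\vec e^\top\vec x\ge 1$ on $\F$ this slope is at most $-1$. Taking the minimum over the finitely many $\vec x\in\F$ therefore produces a concave, continuous, piecewise linear function all of whose one‑sided slopes are $\le -1$; in particular $P$ is strictly decreasing on $\real_{+}$ and tends to $-\infty$.

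For the value at $\gamma=0$, note that $g_{0}(\vec x)=\vec x^\top\mat L\vec x=|\partial S|$ for the cut $(S,S')$ encoded by $\vec x$, so $P(0)=\min\{|\partial S|:\emptyset\neq S\subset V,\ |S|\le n/2\}$. Since the cut‑set is symmetric, $|\partial S|=|\partial S'|$, and for any nonempty proper $S$ at least one of $S,S'$ has size at most $n/2$; hence restricting to $|S|\le n/2$ does not change the optimal value, giving $P(0)=\mincut$. As $G$ is connected, $\mincut\ge 1>0$, so from $P(0)>0$, $P(\gamma)\to-\infty$, continuity and strict monotonicity, $P$ has a unique root $\gamma^{\ast}>0$.

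It remains to identify $\gamma^{\ast}$ with $h(G)$. I would do this through the equivalence
\[
P(\gamma)\ge 0 \iff \gamma\le\frac{\vec x^\top\mat L\vec x}{\vec e^\top\vec x}\ \text{ for all }\vec x\in\F \iff \gamma\le h(G),
\]
where division by $\vec e^\top\vec x>0$ is legitimate on $\F$ and the last step is the definition of $h(G)$ in \cref{hforms}; symmetrically, $P(\gamma)\le 0$ iff some $\vec x\in\F$ satisfies $\vec x^\top\mat L\vec x/\vec e^\top\vec x\le\gamma$, i.e.\ iff $\gamma\ge h(G)$. Thus $\{\gamma:P(\gamma)\ge 0\}=(-\infty,h(G)]$ and $\{\gamma:P(\gamma)\le 0\}=[h(G),\infty)$, which are exactly the two sets appearing in the displayed identity (the condition "$g_{\gamma}(\vec x)\ge 0$" being understood for all $\vec x\in\F$, the condition "$g_{\gamma}(\vec x)\le 0$" for some $\vec x\in\F$). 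This simultaneously yields the stated $\max/\min$ formulas and shows the unique root of $P$ is $h(G)$.

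There is no deep obstacle here; the only points needing a little care are the symmetry argument that lets us replace the feasible region of $\mincut$ by $\F$ at $\gamma=0$, and the observation that $\vec e^\top\vec x\ge 1$ on $\F$ is precisely what forces every affine piece to have slope $\le-1$, so that monotonicity can be upgraded to \emph{strict} monotonicity and the root is unique.
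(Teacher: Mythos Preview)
Your proof is correct and follows essentially the same approach as the paper: the symmetry argument for $P(0)=\mincut$ and the finite pointwise-minimum-of-affine-functions observation for concavity and piecewise linearity are identical. Your treatment is in fact more self-contained than the paper's, which cites \cite[Lemma~3]{dinkelbach1967nonlinear} for strict monotonicity rather than giving your direct slope $\le -1$ argument, and which leaves the identification of the root with $h(G)$ and the derivation of the $\max/\min$ formulas implicit.
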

	
	\begin{proof}
		We have
		\begin{align*}
		P(0) &= \min_{\vec x} \big\{ \vec x^\top \mat L \vec x: \vec x \in \F\big\}\\
		&= \min_{S}\big\{ \lvert {\partial S}\rvert : \emptyset\neq S\subset V, |S| \le \nicefrac{n}{2}\big\} \\
		&= \min_{S}\big\{ \lvert {\partial S}\rvert : \emptyset\neq S\subset V\big\} \\
		&= \mincut 
		\end{align*}
		where the penultimate equality is from symmetry of the cut function $\abs{\partial S} = \abs{\partial S^{\prime}}$. Finiteness of $\F$ and linearity of $g_{\gamma}$ in $\gamma$ tells us that $P$ is the pointwise minimum of finitely many affine (in $\gamma$) functions, and so $P$ is a concave piecewise linear function. The strictly decreasing property was shown in \cite[Lemma~3]{dinkelbach1967nonlinear} for general nonlinear fractional problems.
	\end{proof}
	
	This implies that the edge expansion of a graph can be computed using a root-finding algorithm for the function $P$. One evaluation of $P$ for a given~$\gamma$ still means solving a binary quadratic problem with two linear inequalities. Hence, reducing this number of evaluations is crucial to compute $h(G)$ in reasonable time. There are several strategies to do so, such as binary search. Our approach is to evaluate~$P$ starting with~$\gamma_1$ equal to some good upper bound on $\h$ (in our experiments, we used our heuristic from \Cref{sec:upper-bisection}).
	We are already done if we have found the optimum with our heuristic, that is when~$P(\gamma_1) = 0$.
	Otherwise, there is some 
	$\vec x^{1} \in \F$ such that $g_\gamma(\vec x^{1}) < 0$ and therefore~$f(\vec x^{1}) < \gamma$.
	This means that~$f(\vec x^{1})$ is a better upper bound than~$\gamma_1$.
	Hence, we now set $\gamma_2 = f(\vec x^{1})$ and repeat until we find the optimum as described
	in Algorithm~\ref{alg:dinkelbach}. Since~$P(\gamma) < 0$ if and only if $\gamma > \h$, the stopping criterion is checking whether $P(\gamma) < 0$ at the current iterate.

	\begin{algorithm} \label{alg:dinkelbach}
		\KwIn{graph $G$, upper bound $\gamma_1 \geq h(G)$ from heuristic}
		\KwOut{edge expansion $h(G)$}
		$i = 1$\;
		\While{$P(\gamma_{i}) < 0$}{
			$\vec x_{i} \in \arg \min_{\vec x \in \F} g_{\gamma_{i}}(\vec x)$\;
			$\gamma_{i+1} = f(\vec x_i)$\;
			$i = i+1$\;}
		$h(G) = \gamma_i$\;
		\caption{Discrete Newton-Dinkelbach algorithm for edge expansion}
	\end{algorithm}

	The superlinear convergence rate of Dinkelbach's algorithm 
	was established in~\cite{schaible1976fractional}.
	We derive a similar convergence result for Algorithm~\ref{alg:dinkelbach}.

    \begin{theorem}\label{thm:convdinkelbach}
		Algorithm \ref{alg:dinkelbach} terminates with the optimal value after
		finitely many steps, the rate of convergence is superlinear. 
	\end{theorem}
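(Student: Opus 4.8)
The plan is to derive both assertions from the structure of $P$ established in \Cref{prop:dinkelbach-sign}: $P$ is concave, piecewise linear, strictly decreasing on $\real_+$, and has $\h$ as its unique root. I would treat finite termination first and the rate second.

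\textbf{Finite termination.} If $P(\gamma_1)\ge 0$, then since $\gamma_1\ge\h$ and $P$ is strictly decreasing with root $\h$, we must have $\gamma_1=\h$ and the algorithm returns immediately; so assume the loop is entered at least once. Whenever iteration $i$ runs we have $P(\gamma_i)=g_{\gamma_i}(\vec x_i)=\vec x_i^\top\mat L\vec x_i-\gamma_i\,\vec e^\top\vec x_i<0$, and since $\vec x_i\in\F$ has $\vec e^\top\vec x_i\ge 1$, dividing gives $\gamma_{i+1}=f(\vec x_i)<\gamma_i$; on the other hand $\gamma_{i+1}=f(\vec x_i)\ge\min_{\vec x\in\F}f(\vec x)=\h$. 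Hence $(\gamma_i)$ is strictly decreasing and bounded below by $\h$, so the iterates $\vec x_i$ are pairwise distinct (they produce pairwise distinct values $f(\vec x_i)=\gamma_{i+1}$). As $\F$ is finite, the loop runs only finitely often; at termination $P(\gamma_i)\ge 0$, which together with $\gamma_i\ge\h$ forces $\gamma_i=\h$, so the algorithm outputs the optimal value.

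\textbf{Superlinear rate.} Here I would mirror the argument of~\cite{schaible1976fractional}, using that each iteration performs a Newton step on the concave function $P$. Put $q_i:=\vec e^\top\vec x_i>0$. The affine function $\ell_i(\gamma):=\vec x_i^\top\mat L\vec x_i-\gamma q_i$ satisfies $\ell_i\ge P$ everywhere with equality at $\gamma_i$ (since $\vec x_i$ attains the minimum defining $P(\gamma_i)$), so $-q_i$ is a supergradient of $P$ at $\gamma_i$; moreover $\ell_i(\gamma_{i+1})=0$ because $\gamma_{i+1}=f(\vec x_i)$, so $\gamma_{i+1}$ is the zero of this supporting line and $P(\gamma_i)=\ell_i(\gamma_i)=q_i(\gamma_{i+1}-\gamma_i)$. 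Using $P(\h)=0$ one then computes
\[
\frac{\gamma_{i+1}-\h}{\gamma_i-\h} \;=\; 1+\frac{1}{q_i}\cdot\frac{P(\gamma_i)-P(\h)}{\gamma_i-\h} \;=\; 1+\frac{s_i}{q_i},
\]
where $s_i:=\big(P(\gamma_i)-P(\h)\big)/(\gamma_i-\h)<0$ is the secant slope of $P$ between $\h$ and $\gamma_i$. As $\gamma_i\downarrow\h$, the secant slope $s_i$ tends to the right derivative $P'_+(\h)$; and since $-q_i$ lies in $[P'_+(\gamma_i),P'_-(\gamma_i)]$ for the concave $P$, and both endpoints also tend to $P'_+(\h)$ (immediate for piecewise-linear $P$, or by the standard one-sided-continuity properties of derivatives of concave functions), we get $q_i\to-P'_+(\h)$. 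Because $P$ is strictly decreasing and piecewise linear, $P'_+(\h)<0$, hence $s_i/q_i\to-1$ and the contraction factor $(\gamma_{i+1}-\h)/(\gamma_i-\h)\to 0$. One can in fact say more: if $f(\vec x)>\h$ then $\vec x^\top\mat L\vec x-\h\,\vec e^\top\vec x>0$, so the line $\ell_{\vec x}$ is strictly positive at $\h$ and hence on a neighbourhood of $\h$; thus as soon as $\gamma_i$ enters the linear piece of $P$ immediately to the right of $\h$, the minimizer $\vec x_i$ is $f$-optimal, $\gamma_{i+1}=f(\vec x_i)=\h$, and the algorithm stops, so the terminating step has contraction factor exactly $0$.

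\textbf{Expected difficulty.} Finite termination is routine; the delicate part is the rate. One must work with the one-sided derivatives / supergradients of the non-smooth concave $P$ and justify their convergence to $P'_+(\h)$ as $\gamma_i\downarrow\h$, and one should be careful about what ``superlinear convergence'' means for a sequence that is in fact finite — the clean reading being that the per-step contraction factor $(\gamma_{i+1}-\h)/(\gamma_i-\h)$ can be made arbitrarily small by taking $\gamma_i$ close enough to $\h$, and equals $0$ on the last step.
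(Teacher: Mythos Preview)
Your argument is correct. For finite termination you and the paper agree in substance (the paper simply appeals to piecewise linearity of $P$ from \Cref{prop:dinkelbach-sign}; you make the counting argument over $\F$ explicit). For the rate, however, the paper proceeds differently: it first proves two short algebraic lemmas --- \Cref{lemma:ineq-dinkelbach}, which from optimality of $\vec x'$ in $P(\gamma')$ derives the inequality $f(\vec x')-f(\vec x'')\le\big(P(\gamma'')-(\gamma'-\gamma'')\,\vec e^\top \vec x''\big)\big(\tfrac{1}{\vec e^\top \vec x'}-\tfrac{1}{\vec e^\top \vec x''}\big)$, and \Cref{lemma:denominator-dinkelbach}, showing $\gamma''<\gamma'\Rightarrow \vec e^\top \vec x''\le \vec e^\top \vec x'$ --- and then specialises the first with $\gamma''=\gamma^*$ to obtain $\gamma_{i+1}-\gamma^*\le(\gamma_i-\gamma^*)\big(1-\vec e^\top \vec x^*/\vec e^\top \vec x_i\big)$, invoking the second lemma to argue the factor decreases to $0$. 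Your route is instead the Newton-step reading of the iteration on the concave function $P$: with $q_i=\vec e^\top \vec x_i$ you obtain the \emph{equality} $(\gamma_{i+1}-\h)/(\gamma_i-\h)=1+s_i/q_i$ and drive it to $0$ by squeezing the supergradient $-q_i$ and the secant slope $s_i$ to the common limit $P'_+(\h)$. Your approach is more geometric, yields an exact expression for the contraction factor rather than an upper bound, and makes the one-step finish on the last linear piece transparent; the paper's approach stays purely algebraic, avoids convex-analysis vocabulary, and surfaces the role of the optimal denominator $\vec e^\top \vec x^*$ explicitly in the bound. Your caveat about what superlinearity means for a finite sequence applies equally to the paper's formulation.
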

	To prove the convergence rate of \Cref{thm:convdinkelbach},
	we first need the following two lemmas.
	\begin{lemma}
		\label{lemma:ineq-dinkelbach}
		Let $\gamma'$, $\gamma''\in \real$ and $\vec x'$, $\vec x''\in \mathcal F$ be the optimal
		solutions of $P(\gamma')$ and $P(\gamma'')$, then
		$$ f(\vec x') - f(\vec x'') \leq
		\big( P(\gamma'') - (\gamma' - \gamma'') \vec e^\top \vec x'' \big)
		\Big( \frac{1}{\vec e^\top \vec x'} - \frac{1}{\vec e^\top \vec x''} \Big) .$$
	\end{lemma}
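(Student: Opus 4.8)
The plan is to exploit the optimality of $\vec x'$ and $\vec x''$ for their respective parametrized problems and then rearrange. Write $s' = \vec e^\top \vec x'$ and $s'' = \vec e^\top \vec x''$ for brevity. Since $\vec x'$ minimizes $g_{\gamma'}$ over $\F$ and $\vec x'' \in \F$, we have $g_{\gamma'}(\vec x') \le g_{\gamma'}(\vec x'')$, i.e. ${\vec x'}^\top \mat L \vec x' - \gamma' s' \le {\vec x''}^\top \mat L \vec x'' - \gamma' s''$. The left-hand side equals $s'\big(f(\vec x') - \gamma'\big)$ and the right-hand side can be rewritten using $P(\gamma'') = {\vec x''}^\top \mat L \vec x'' - \gamma'' s''$ as $P(\gamma'') + \gamma'' s'' - \gamma' s'' = P(\gamma'') - (\gamma' - \gamma'') s''$. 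So the key inequality becomes
\[
s'\big(f(\vec x') - \gamma'\big) \;\le\; P(\gamma'') - (\gamma' - \gamma'') s''.
\]

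Next I would do the symmetric step: since $\vec x''$ minimizes $g_{\gamma''}$ and $\vec x' \in \F$, we get $g_{\gamma''}(\vec x'') \le g_{\gamma''}(\vec x')$, which after the same manipulation gives $P(\gamma'') = s''\big(f(\vec x'') - \gamma''\big)$ on the left and $s'\big(f(\vec x') - \gamma''\big)$ on the right, hence $s''\big(f(\vec x'') - \gamma''\big) \le s'\big(f(\vec x') - \gamma''\big)$. Actually the cleaner route is: divide the first displayed inequality by $s' > 0$ to obtain $f(\vec x') \le \gamma' + \frac{1}{s'}\big(P(\gamma'') - (\gamma' - \gamma'') s''\big)$, and observe from the definition $P(\gamma'') = s''\big(f(\vec x'') - \gamma''\big)$ that $f(\vec x'') = \gamma'' + \frac{P(\gamma'')}{s''}$. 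Subtracting,
\[
f(\vec x') - f(\vec x'') \;\le\; (\gamma' - \gamma'') + \frac{P(\gamma'') - (\gamma'-\gamma'')s''}{s'} - \frac{P(\gamma'')}{s''}.
\]

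Finally I would collect terms. Grouping the $P(\gamma'')$ contributions gives $P(\gamma'')\big(\frac{1}{s'} - \frac{1}{s''}\big)$, and grouping the $(\gamma' - \gamma'')$ contributions gives $(\gamma'-\gamma'') - (\gamma'-\gamma'')\frac{s''}{s'} = (\gamma'-\gamma'')\big(1 - \frac{s''}{s'}\big) = -(\gamma'-\gamma'')s''\big(\frac{1}{s'} - \frac{1}{s''}\big)$. Adding these yields exactly $\big(P(\gamma'') - (\gamma'-\gamma'')s''\big)\big(\frac{1}{s'} - \frac{1}{s''}\big)$, which is the claimed bound once we substitute back $s' = \vec e^\top \vec x'$, $s'' = \vec e^\top \vec x''$. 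There is no real obstacle here beyond bookkeeping; the only point requiring (trivial) care is that $s', s'' \ge 1 > 0$ on $\F$, so dividing by them preserves inequality directions and all reciprocals are well-defined.
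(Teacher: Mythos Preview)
Your proposal is correct and follows essentially the same route as the paper: use optimality of $\vec x'$ for $P(\gamma')$ against the test point $\vec x''$, divide by $\vec e^\top \vec x'$, subtract the identity $f(\vec x'') = \gamma'' + P(\gamma'')/(\vec e^\top \vec x'')$, and collect terms. The brief detour toward a ``symmetric step'' using optimality of $\vec x''$ for $P(\gamma'')$ is indeed unnecessary (as you noticed), since that optimality enters only through the definition $P(\gamma'') = g_{\gamma''}(\vec x'')$.
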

	\begin{proof}
		By the optimality of $\vec x'$ for $P(\gamma')$ it holds that
		$$
		\vec x'^\top \mat L \vec x' - \gamma' \vec e^\top \vec x' \leq \vec x''^\top \mat L \vec x'' - \gamma' \vec e^\top \vec x''.
		$$
		Dividing both sides by $\vec e^\top \vec x''$ and rearranging yields
		$$
		f(\vec x') \leq \frac{\vec x''^\top \mat L \vec x''}{\vec e^\top \vec x'} + \gamma' \Big( 1 - \frac{\vec e^\top \vec x''}{\vec e^\top \vec x'} \Big).
		$$
		Hence, we get that
		\begin{align*}
		f(\vec x') - f(\vec x'') &\leq \frac{\vec x''^\top \mat L \vec x''}{\vec e^\top \vec x'}
		+ \gamma' \Big( 1 - \frac{\vec e^\top \vec x''}{\vec e^\top \vec x'} \Big)
		- \frac{\vec x''^\top \mat L \vec x''}{\vec e^\top \vec x''}\\
		&= (\vec x''^\top \mat L \vec x'' -  \gamma' \vec e^\top \vec x'') \Big( \frac{1}{\vec e^\top \vec x'} - \frac{1}{\vec e^\top \vec x''} \Big)\\
		&=  (\vec x''^\top \mat L \vec x'' - \gamma'' \vec e^\top \vec x'' + \gamma'' \vec e^\top \vec x'' -  \gamma'\vec e^\top \vec x'') \Big( \frac{1}{\vec e^\top \vec x'} - \frac{1}{\vec e^\top \vec x''} \Big)\\
		&= \big( P(\gamma'') - (\gamma' - \gamma'') \vec e^\top \vec x'' \big)
		\Big( \frac{1}{\vec e^\top \vec x'} - \frac{1}{\vec e^\top \vec x''} \Big).
		\end{align*} 
	\end{proof}
	
	\begin{lemma}
		\label{lemma:denominator-dinkelbach}
		Let $\vec x'$ and $\vec x''$ be optimal solutions of $P(\gamma')$ and $P(\gamma'')$,
		then for $\gamma'' < \gamma'$ it holds that $\vec e^\top \vec x'' \leq \vec e^\top \vec x'$.
	\end{lemma}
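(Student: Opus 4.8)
The statement to prove is: if $\gamma'' < \gamma'$ and $\vec x'$, $\vec x''$ are optimal for $P(\gamma')$, $P(\gamma'')$ respectively, then $\vec e^\top \vec x'' \le \vec e^\top \vec x'$. The plan is to use the two optimality inequalities and add them. By optimality of $\vec x'$ for $P(\gamma')$,
\[
\vec x'^\top \mat L \vec x' - \gamma'\,\vec e^\top \vec x' \le \vec x''^\top \mat L \vec x'' - \gamma'\,\vec e^\top \vec x'' ,
\]
and by optimality of $\vec x''$ for $P(\gamma'')$,
\[
\vec x''^\top \mat L \vec x'' - \gamma''\,\vec e^\top \vec x'' \le \vec x'^\top \mat L \vec x' - \gamma''\,\vec e^\top \vec x' .
\]

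Adding these two inequalities, the quadratic terms $\vec x'^\top \mat L \vec x'$ and $\vec x''^\top \mat L \vec x''$ cancel from both sides, leaving
\[
-\gamma'\,\vec e^\top \vec x' - \gamma''\,\vec e^\top \vec x'' \le -\gamma'\,\vec e^\top \vec x'' - \gamma''\,\vec e^\top \vec x' ,
\]
which rearranges to $(\gamma' - \gamma'')\,\vec e^\top \vec x'' \le (\gamma' - \gamma'')\,\vec e^\top \vec x'$. Since $\gamma' - \gamma'' > 0$, dividing through gives $\vec e^\top \vec x'' \le \vec e^\top \vec x'$, as claimed. This is the standard ``adding the two optimality inequalities'' argument; the only thing to be slightly careful about is that $\vec x'$ and $\vec x''$ are feasible for the \emph{same} set $\F$ (which does not depend on $\gamma$), so each is admissible as a competitor in the other's minimization problem — this is what makes both inequalities valid.

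There is essentially no obstacle here: the argument is a two-line exchange argument and does not use positive semidefiniteness of $\mat L$, integrality, or any structure of $\F$ beyond $\gamma$-independence. If one wanted, one could instead phrase it as: the map $\gamma \mapsto \vec e^\top \vec x(\gamma)$ is, up to sign, a subgradient/slope selection of the concave function $P$ (since $P(\gamma) = \min_{\vec x}(\vec x^\top \mat L \vec x - \gamma\, \vec e^\top \vec x)$ has $-\vec e^\top \vec x(\gamma)$ as a supergradient in $\gamma$), and supergradients of a concave function are monotonically nonincreasing — but the direct inequality-adding proof is cleaner and self-contained, so that is the route I would take.
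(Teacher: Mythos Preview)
Your proof is correct and follows essentially the same approach as the paper: both write down the two optimality inequalities, add them so the quadratic terms cancel, and then divide by the (nonzero) difference $\gamma'-\gamma''$ to conclude. Your explicit remark that $\F$ is $\gamma$-independent (so each minimizer is a valid competitor in the other problem) is a nice clarification that the paper leaves implicit.
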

	\begin{proof}
		From the optimality of $\vec x'$ and $\vec x''$ it follows that
		\begin{align*}
		\vec x'^\top \mat L \vec x' - \gamma' \vec e^\top \vec x' &\leq \vec x''^\top \mat L \vec x'' - \gamma' \vec e^\top \vec x'' \text{\quad and}\\
		\vec x''^\top \mat L \vec x'' - \gamma'' \vec e^\top \vec x'' &\leq \vec x'^\top \mat L \vec x' - \gamma'' \vec e^\top \vec x'.
		\end{align*}
		Adding the above two inequalities yields
		$$ (\gamma'' - \gamma') \vec e^\top \vec x' \leq (\gamma'' - \gamma') \vec e^\top \vec x''$$
		and hence the above claim holds. 
	\end{proof}

	\begin{proof}[Proof of \Cref{thm:convdinkelbach}]
                Let $\vec x^* \in \mathcal F$ be the optimum of the edge expansion problem, 
		i.e., $f(\vec x^*) = h(G)$ and let $\gamma^* = f(\vec{x}^*)$.
		From~\Cref{prop:dinkelbach-sign} we know that $P$ is a
		strictly decreasing piecewise linear function and
		therefore the algorithm terminates after finitely many
		iterations with	value $\gamma^*$.

		Let further $\gamma_i$ be the upper bound on $h(G)$ to check
		in the~$i$-th iteration of	Algorithm~\ref{alg:dinkelbach}.
		From \Cref{lemma:ineq-dinkelbach}, we get that 
		$$ \gamma_{i+1} - \gamma^* \leq (\gamma_i - \gamma^*)\Big(1 - \frac{\vec e^\top \vec x^*}{\vec e^\top \vec x_i} \Big) $$
		holds, since $P(\gamma^*) = 0$.
                The sequence  
		$$ \Big(1 - \frac{\vec e^\top \vec x^*}{\vec e^\top \vec x_i} \Big) $$
		is strictly decreasing (and converging to~0) as proved 
		in~\Cref{lemma:denominator-dinkelbach}.
		Therefore, the convergence rate of Algorithm~\ref{alg:dinkelbach}
		is superlinear.
	\end{proof}

	\subsection{Solving the parametrized optimization problem}\label{sec:solve_par}
	
	Evaluating $P(\gamma)$ requires solving a binary
	quadratic problem with two linear inequality constraints which is in general NP-hard.
	
	Most solvers for binary quadratic programs benefit from input data given as 
	integer as this aids the performance of the underlying branch-and-bound algorithm.
	Since we only consider rational values for $\gamma$, we 
	introduce the following parametric optimization problem
	\begin{equation*}
	Q(\gamma) = \min_{\vec x} \{ \gamma_d \vec x^\top \mat L \vec x - \gamma_n \vec e^\top \vec x: \vec x \in \F\}
	\end{equation*}
	for $\gamma = \nicefrac{\gamma_n}{\gamma_d}$ with integers $\gamma_{n} \ge 0$ and $\gamma_d > 0$.
	Observe that $Q(\gamma)=\gamma_d P(\gamma)$ and
	all considerations from above apply to this new formulation as well.
	
	Two performant parallel state-of-the-art solvers for binary quadratic programs with linear equality constraints are
	BiqBin~\cite{biqbin} and BiqCrunch~\cite{krislock2017biqcrunch}.
	BiqBin first transforms the problem into a QUBO in a pre-processing phase and then solves the equivalent max-cut problem
	with its max-cut solver. To solve $Q(\gamma)$, we make use of the problem specific properties and
	directly transform it into a max-cut problem ourselves.
	The first step towards achieving this is to obtain an exact formulation as a QUBO using binary encoding of the slack variables and the penalty parameter suggested in~\cite[Thm. 15]{expedis}. To aid our derivation, let us denote the integer $n_{s}$ and vector~$\vec v^{n_s} \in \real^{n_s+1}$ by
	\[
	n_s = \Big\lceil \log_2 \Big\lfloor \frac n 2 \Big\rfloor \Big\rceil - 1, \qquad  v^{n_s}_i = 2^{i-1} \text{ for all } i.
	\]

	\begin{proposition}
		Let $\mathcal F = \{\vec x \in \{0,1\}^n : 1 \leq \vec e^\top \vec x \leq \frac{n}{2}\}$
		and $\vec v^{n_s} \in \real^{n_s+1}$ with $ v^{n_s}_i = 2^{i-1}$ and
		$n_s = \lceil \log_2(\lfloor \frac n 2 \rfloor) \rceil - 1$.
		Then
		$$\mathcal F = \Big\{\vec x \in \{0,1\}^n :
		\vec e^\top \vec x - \vec \alpha^\top \vec v^{n_s} = 1,\ 
		\vec e^\top \vec x + \vec \beta^\top \vec v^{n_s} = \Big\lfloor \frac n 2  \Big\rfloor ,\ 
		\vec \alpha, \vec \beta \in \{0,1\}^{n_s + 1}
		\Big\} .$$
	\end{proposition}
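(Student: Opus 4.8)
The plan is to exploit that $\vec e^\top \vec x$ is a nonnegative integer whenever $\vec x\in\{0,1\}^n$, so that the bound $1\le\vec e^\top\vec x\le\frac{n}{2}$ defining $\mathcal F$ is the same as $1\le\vec e^\top\vec x\le\lfloor\frac{n}{2}\rfloor$, and then to turn each of these two integer inequalities into an equality by absorbing the slack — which is a nonnegative integer bounded by $\lfloor\frac{n}{2}\rfloor-1$ — into a binary expansion. The only quantitative ingredient is the standard fact that the map $\vec\gamma\mapsto\vec\gamma^\top\vec v^{n_s}$ restricted to $\{0,1\}^{n_s+1}$ is a bijection onto $\{0,1,\dots,2^{n_s+1}-1\}$, since $v^{n_s}_i=2^{i-1}$ gives exactly base-$2$ representation with $n_s+1$ bits.

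First I would check that $n_s+1$ bits suffice, i.e. that every integer in $\{0,1,\dots,\lfloor\frac{n}{2}\rfloor-1\}$ is of the form $\vec\gamma^\top\vec v^{n_s}$: by definition $n_s+1=\lceil\log_2\lfloor\frac{n}{2}\rfloor\rceil\ge\log_2\lfloor\frac{n}{2}\rfloor$, hence $2^{n_s+1}\ge\lfloor\frac{n}{2}\rfloor$, so $\{0,1,\dots,\lfloor\frac{n}{2}\rfloor-1\}\subseteq\{0,1,\dots,2^{n_s+1}-1\}$ and the claim follows from the bijection above. For the inclusion ``$\subseteq$'', take $\vec x\in\mathcal F$ and put $t=\vec e^\top\vec x$, so $1\le t\le\lfloor\frac{n}{2}\rfloor$. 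Then both $t-1$ and $\lfloor\frac{n}{2}\rfloor-t$ lie in $\{0,\dots,\lfloor\frac{n}{2}\rfloor-1\}$, so there exist $\vec\alpha,\vec\beta\in\{0,1\}^{n_s+1}$ with $\vec\alpha^\top\vec v^{n_s}=t-1$ and $\vec\beta^\top\vec v^{n_s}=\lfloor\frac{n}{2}\rfloor-t$, which is precisely $\vec e^\top\vec x-\vec\alpha^\top\vec v^{n_s}=1$ and $\vec e^\top\vec x+\vec\beta^\top\vec v^{n_s}=\lfloor\frac{n}{2}\rfloor$.

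For the reverse inclusion ``$\supseteq$'', suppose $\vec x\in\{0,1\}^n$ and $\vec\alpha,\vec\beta\in\{0,1\}^{n_s+1}$ satisfy the two equalities. Since $\vec\alpha^\top\vec v^{n_s}\ge0$ and $\vec\beta^\top\vec v^{n_s}\ge0$, we get $\vec e^\top\vec x=1+\vec\alpha^\top\vec v^{n_s}\ge1$ and $\vec e^\top\vec x=\lfloor\frac{n}{2}\rfloor-\vec\beta^\top\vec v^{n_s}\le\lfloor\frac{n}{2}\rfloor\le\frac{n}{2}$, so $\vec x\in\mathcal F$. Combining the two inclusions gives the stated identity.

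There is essentially no obstacle here; the one point that has to be gotten right is the range computation in the first step, which is exactly the reason for the choice $n_s=\lceil\log_2\lfloor\frac{n}{2}\rfloor\rceil-1$. (In the degenerate case $\lfloor\frac{n}{2}\rfloor=1$, i.e. $n=3$, one has $n_s=-1$, the vectors $\vec v^{n_s},\vec\alpha,\vec\beta$ are empty, both equalities reduce to $\vec e^\top\vec x=1$, and the identity holds trivially.)
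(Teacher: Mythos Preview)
Your proof is correct and follows essentially the same approach as the paper's: introduce integer slacks for the two inequalities, observe they lie in $\{0,\dots,\lfloor n/2\rfloor-1\}$, and encode them in binary with $n_s+1$ bits, verifying that $2^{n_s+1}\ge\lfloor n/2\rfloor$ makes this possible. Your write-up is in fact more explicit than the paper's, which only sketches the two inclusions, and your remark on the degenerate case $n=3$ is a nice addition.
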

	
	\begin{proof}
		For any $x \in \mathcal F$ it holds that $\vec e^\top \vec x = 1 + s = \lfloor \frac n 2 \rfloor -t $ for some slack variables $s$ and $t$ with $0 \leq s, t \leq \lfloor \frac n 2 \rfloor - 1$.
		In fact, any upper bound on $s$ and~$t$ greater or equal than
		$\lfloor \frac n 2 \rfloor - 1$ is fine, since from $\vec e^\top \vec x = 1 + s$ and 
		$s \geq 0$ it follows that $\vec e^\top \vec x \geq 1$ and
		from~$\vec e^\top \vec x = \lfloor \frac n 2 \rfloor - t$ and $t \geq 0$ it follows that
		$\vec e^\top \vec x \leq \lfloor \frac n 2 \rfloor$. 
		The smallest possible value for~$n_s$ is
		$\lceil \log_2(\lfloor \frac n 2 \rfloor) \rceil - 1$, since this gives an upper
		bound of $2^{n_s + 1} - 1$ on $s$ and $t$.
	\end{proof}

	\begin{proposition}
		Let $\vec x' \in \mathcal{F}$ with $\frac{\vec x'^\top \mat L \vec x'}{\vec e^\top \vec x'} = \frac{\gamma_n}{\gamma_d} = \gamma$ and $\gamma_d > 0$.
		The problem~$Q(\gamma)$ can then be equivalently formulated as the following QUBO,
		\begin{equation}\label{eq:dinkelbach-unconstrained}
		\begin{multlined}
		\min_{\vec x, \vec \alpha, \vec \beta} 
		\Bigg\{  \gamma_d \vec x^\top \mat L \vec x - \gamma_n \vec e^\top \vec x +
		\sigma \bigg\lVert
		\bigg(
		\begin{array}{c}
		\vec e^\top \vec x - \vec \alpha^\top \vec v^{n_s} -1 \\
		\vec e^\top \vec x  + \vec \beta^\top \vec v^{n_s} - \lfloor \frac n 2 \rfloor
		\end{array}
		\bigg)
		\bigg\rVert^2:\qquad \\
		\vec x \in \{0,1\}^n,\, \vec \alpha, \vec \beta \in \{0,1\}^{n_s +1}  \Bigg\}	
		\end{multlined}
		\end{equation}
		with $\sigma > \gamma_n n $.
	\end{proposition}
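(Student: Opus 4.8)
The plan is to run the standard exact-penalty argument: show that whenever the auxiliary variables $\vec\alpha,\vec\beta$ are chosen so that both linear equalities hold, the penalty term in \eqref{eq:dinkelbach-unconstrained} vanishes and the QUBO objective collapses to the objective of $Q(\gamma)$, and that for every other choice of $(\vec x,\vec\alpha,\vec\beta)$ the penalty term is so large that such a point can never be a minimizer.

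First I would invoke the preceding proposition, which states exactly that $\vec x\in\mathcal F$ if and only if there exist $\vec\alpha,\vec\beta\in\{0,1\}^{n_s+1}$ with $\vec e^\top\vec x-\vec\alpha^\top\vec v^{n_s}=1$ and $\vec e^\top\vec x+\vec\beta^\top\vec v^{n_s}=\lfloor n/2\rfloor$. For any such feasible triple the bracketed vector in \eqref{eq:dinkelbach-unconstrained} is zero, so the QUBO objective equals $\gamma_d\vec x^\top\mat L\vec x-\gamma_n\vec e^\top\vec x$; minimizing over all feasible triples therefore gives precisely $Q(\gamma)$. In particular, plugging in the point $\vec x'$ from the hypothesis, for which $\gamma_d\vec x'^\top\mat L\vec x'-\gamma_n\vec e^\top\vec x'=0$ because its ratio is exactly $\gamma=\gamma_n/\gamma_d$, shows that the QUBO optimum is at most $0$, and in particular at most $Q(\gamma)\le 0$.

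Next I would bound the infeasible triples from below. The quantities $\vec e^\top\vec x$, $\vec\alpha^\top\vec v^{n_s}$, $\vec\beta^\top\vec v^{n_s}$, $1$ and $\lfloor n/2\rfloor$ are all integers, so the two entries of the bracketed vector are integers and its squared Euclidean norm is a sum of two squares of integers; hence it is at least $1$ whenever the triple is infeasible. Combining this with $\vec x^\top\mat L\vec x\ge 0$ (positive semidefiniteness of the Laplacian, equivalently $\sum_{(i,j)\in E}(x_i-x_j)^2\ge 0$) and $0\le\vec e^\top\vec x\le n$ for $\vec x\in\{0,1\}^n$, the objective at any infeasible triple is at least $-\gamma_n n+\sigma$, which is strictly positive by the choice $\sigma>\gamma_n n$, hence strictly larger than $Q(\gamma)\le 0$. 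Putting the two bounds together, the minimum of the QUBO is attained only at feasible triples, where its objective matches that of $Q(\gamma)$, so the two optimal values coincide and every optimizer of \eqref{eq:dinkelbach-unconstrained} yields, through its $\vec x$-component, an optimizer of $Q(\gamma)$, and conversely.

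I do not expect a genuine obstacle here beyond bookkeeping the integrality of the slack encoding. The one point worth flagging is the role of the hypothesis on $\vec x'$: it is exactly what guarantees $Q(\gamma)\le 0$, and this is what makes the instance-independent threshold $\sigma>\gamma_n n$ sufficient; without it one would instead need a lower bound on $Q(\gamma)$ entering the penalty constant.
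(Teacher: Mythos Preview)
Your proposal is correct and follows essentially the same exact-penalty argument as the paper: use the preceding proposition to identify the feasible triples, observe that the penalty vanishes there so the objectives coincide, use $\vec x'$ to certify $Q(\gamma)\le 0$, and bound all other triples below by $-\gamma_n n+\sigma>0$ via integrality of the penalty entries and positive semidefiniteness of $\mat L$. If anything, your treatment is slightly tidier in that you explicitly handle all infeasible triples (including $\vec x\in\mathcal F$ paired with wrong $\vec\alpha,\vec\beta$) uniformly via the integrality argument, whereas the paper only spells out the case $\vec x\notin\mathcal F$.
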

	
	\begin{proof}
		Let $g(\vec x, \vec \alpha, \vec \beta) $ denote the objective function of~\eqref{eq:dinkelbach-unconstrained}.
		For any feasible vector $\vec x \in \mathcal{F}$ there exist uniquely defined $\vec{\alpha_x}$, $\vec{\beta_x}$
		such that $g(\vec x, \vec{\alpha_x}, \vec{\beta_x}) = \gamma_d \vec x^\top \mat L \vec x - \gamma_n \vec e^\top \vec x$. Thus, the objective function of~$Q(\gamma)$ and~\eqref{eq:dinkelbach-unconstrained} coincide
		for~$\vec x \in \mathcal{F}$.
		Moreover, for $\vec x'$ it holds that~$g(\vec x', \vec{\alpha_{x'}}, \vec{\beta_ {x'}}) = 0$.
		
		For $\vec x \in \{0,1\}^n \setminus \mathcal F$ there do not
		exist $\vec \alpha$, $\vec \beta \in \{0,1\}^{n_s + 1}$
		such that both equalities
		$\vec e^\top \vec x - \vec \alpha^\top \vec v^{n_s} = 1$ and
		$\vec e^\top \vec x + \vec \beta^\top \vec v^{n_s} = \big\lfloor \frac n 2  \big\rfloor$
		are satisfied, as one of the slack variables has to be negative in order to fulfill the
		constraints.
		Additionally, since $\mat L$ is positive semidefinite we can conclude that
		$$g(\vec x, \vec \alpha, \vec \beta) \geq -\gamma_n n + \sigma > 0
		= g(\vec x', \vec{\alpha_{x'}}, \vec{\beta_ {x'}}).$$
		Therefore, $Q(\gamma)$ is an equivalent formulation of~\eqref{eq:dinkelbach-unconstrained}.
	\end{proof}
	
	The unconstrained binary quadratic program~\eqref{eq:dinkelbach-unconstrained} can again be transformed to a max-cut problem,
	as explained in~\cite{Barahona1989} for example. Applied to our
	problem we obtain the following result.
	
	\begin{corollary}
		Let $G=(V,E)$ and let $G''$ be the graph with vertices from $V$ plus
		the vertices $v_0, v_{\alpha_0}, \dots, v_{\alpha_{n_s}},
		v_{\beta_0}, \dots, v_{\beta_{n_s}}$ for the variable
		vectors~$\vec \alpha$ and~$\vec \beta$.
		
		Let the weights $c_{uw}$ on the edges of $G''$ be as follows.
		\begin{equation*}
		c_{uv_0} =\begin{cases}
		2\sigma(n - 1 - \lfloor \frac n 2 \rfloor) - \gamma_n & \text{ if } u \in V(G)\\
		2\sigma(2^{n_s} - \frac{n-1}{2})2^i & \text{ if } u= v_{\alpha_i}\\
		2\sigma(2^{n_s} - \lfloor \frac n 2 \rfloor + \frac{n - 1}{2})2^i
		& \text{ if } u = v_{\beta_i}\end{cases}
		\end{equation*}
		\begin{equation*}
		c_{uw} = \begin{cases}
		-2^i\sigma & \text{ if } u = v_{\alpha_i} \text{ and } w \in V(G)\\
		2^i\sigma & \text{ if } u = v_{\beta_i} \text{ and } w \in V(G)\\
		2^{i+j}\sigma & \text{ if } u = v_{\alpha_i} \text{ and } w =
		v_{\alpha_j}\\
		2^{i+j}\sigma & \text{ if } u = v_{\beta_i} \text{ and } w = v_{\beta_j}\\
		\end{cases}
		\end{equation*}
		For $u\in V(G)$ and $w \in V(G)$, we have 
		\begin{equation*}
		c_{uw} = \begin{cases}
		2\sigma - \gamma_d & \text{ if } uw \in E(G) \\
		2\sigma  & \text{ if } uw \not\in E(G) 
		\end{cases}
		\end{equation*}
		Edges not specified above have weight zero.
		Let the penalty parameter be~$\sigma = \gamma n + 1$. Then all weights
		are integers and it
		holds that $Q(\gamma) = \mathtt{offset} - \operatorname{max-cut}(G'')$
		where
		\begin{equation*}
		\begin{multlined}
		\mathtt{offset} =
		- \gamma_n  n
		+  \sigma \cdot\bigg[2^{n_s + 2}
		\Big( 2\cdot 2^{n_s} -\Big\lfloor \frac n 2 \Big \rfloor - 1\Big)
		+ 2n^2 - 2n + 1\\
		+ \Big\lfloor \frac n 2 \Big \rfloor \cdot \Big(\Big\lfloor \frac n 2 \Big \rfloor
		-2n + 2\Big) \bigg].
		\end{multlined}
		\end{equation*}
	\end{corollary}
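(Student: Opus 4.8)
The plan is to produce the stated max-cut instance in two moves: first rewrite the penalized objective of~\eqref{eq:dinkelbach-unconstrained} as a quadratic form in the binary variables with all squares folded into linear terms, and then apply the classical reduction of a quadratic unconstrained binary program to a max-cut problem by one extra variable (Barahona et al.~\cite{Barahona1989}, already used above for the bisection problem), reading off the weights of $G''$ and the constant $\mathtt{offset}$ by matching coefficients. By the preceding proposition, $Q(\gamma)$ equals the QUBO~\eqref{eq:dinkelbach-unconstrained} — here the hypothesis that some $\vec x'\in\mathcal F$ attains the ratio $\gamma$ is what guarantees the minimum of the QUBO is attained on $\mathcal F$ — so it is enough to transform~\eqref{eq:dinkelbach-unconstrained}.

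\textbf{Canonical quadratic form.} Stack the variables as $\vec z=(\vec x,\vec\alpha,\vec\beta)\in\{0,1\}^{N}$ with $N=n+2(n_s+1)$, identifying the index set of $\vec z$ with $V(G'')\setminus\{v_0\}$ (after relabelling the entries of $\vec\alpha,\vec\beta$ from $\{1,\dots,n_s+1\}$, with weights $2^{i-1}$, to $\{0,\dots,n_s\}$, with weights $2^{i}$, as in the statement). Expanding the two squared affine forms $\vec e^\top\vec x-\vec\alpha^\top\vec v^{n_s}-1$ and $\vec e^\top\vec x+\vec\beta^\top\vec v^{n_s}-\lfloor n/2\rfloor$ and using $z_u^2=z_u$ to absorb every square into a linear term, the objective takes the form $\mathrm{const}_0+\sum_u d_u z_u+\sum_{\{u,w\}}q_{uw}z_u z_w$. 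The quadratic coefficients $q_{uw}$ pick up $-2\gamma_d$ on the edges of $G$ (from $\gamma_d\vec x^\top\mat L\vec x$, since $L_{uw}=-1$) together with $4\sigma$ from the two copies of $(\vec e^\top\vec x)^2$, and $\sigma$ times the appropriate products of entries of $\vec v^{n_s}$ for the $\vec x$–$\vec\alpha$, $\vec x$–$\vec\beta$, $\vec\alpha$–$\vec\alpha$ and $\vec\beta$–$\vec\beta$ cross terms; $d_u$ collects $-\gamma_n$ on $V(G)$, the diagonal $\gamma_d\degree(u)$ of $\mat L$, and the residual diagonal and shift terms of the two squares; and $\mathrm{const}_0=\sigma\bigl(1+\lfloor n/2\rfloor^2\bigr)$.

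\textbf{Reduction to max-cut and matching.} Adjoin $v_0$ and use the two-to-one map $y\in\{-1,1\}^{V(G'')}\mapsto\vec z$ with $z_u=\tfrac12(1+y_{v_0}y_u)$ — the identification that places $S$ on the same side of the cut as $v_0$ — under which $z_uz_w=\tfrac14(1+y_{v_0}y_u+y_{v_0}y_w+y_uy_w)$. Substituting into the canonical form and regrouping, the objective equals $\mathtt{offset}-\sum_{\{u,w\}}c_{uw}\tfrac12(1-y_uy_w)$, i.e.\ $\mathtt{offset}$ minus the value of the cut of $G''$ induced by $y$; hence $Q(\gamma)=\mathtt{offset}-\operatorname{max-cut}(G'')$. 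Comparing the coefficient of $y_uy_w$ on the two sides fixes the weights: for $u,w\in V(G)$ one gets $c_{uw}=\tfrac12 q_{uw}$, which is $2\sigma-\gamma_d$ on an edge and $2\sigma$ on a non-edge; likewise $c_{uw}=\tfrac12 q_{uw}$ gives the $\pm2^{i}\sigma$ and $2^{i+j}\sigma$ weights among $\vec x,\vec\alpha,\vec\beta$, with $q_{uw}=0$ (hence weight $0$) for all unlisted pairs; and for an edge incident to $v_0$ one gets $c_{uv_0}=d_u+\tfrac12\sum_{w}q_{uw}$ (summing over non-auxiliary $w$), which after inserting $d_u$ and $\sum_w q_{uw}$ and simplifying reduces to the three displayed cases for $u\in V(G)$, $u=v_{\alpha_i}$, $u=v_{\beta_i}$. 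Finally $\mathtt{offset}=\mathrm{const}_0+\tfrac12\sum_u d_u+\tfrac14\sum_{\{u,w\}}q_{uw}+\tfrac12\sum_{\{u,w\}\subseteq V(G'')}c_{uw}$, and collecting these terms — the $-\gamma_n n$ coming from the linear part and the bracketed $\sigma$-polynomial from the constants $1,\lfloor n/2\rfloor^2$ and from the $\tfrac14(1+\cdots)$ shifts — yields exactly the stated closed form. With $\sigma$ chosen as in the statement (a positive integer, exceeding $\gamma_n n$ as required above), $\gamma_d$ an integer, and every $\sigma$-prefactor an integer, all weights and the offset are integral.

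\textbf{Main obstacle.} There is no real conceptual difficulty here; the work is bookkeeping. The delicate parts are (i) tracking all the constants — there are many, generated both by the $-1$ and $-\lfloor n/2\rfloor$ shifts inside the two squares and by the substitution $z_uz_w\mapsto\tfrac14(1+\cdots)$ — so that the final $\mathtt{offset}$ collapses to the compact expression stated, and (ii) committing to a single global sign convention in the QUBO-to-max-cut map (taking $z_u=\tfrac12(1+y_{v_0}y_u)$ rather than $\tfrac12(1-y_{v_0}y_u)$) so that $c_{uw}$ and $c_{uv_0}$ come out with precisely the signs in the statement and not their negatives.
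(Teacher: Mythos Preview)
Your proposal is correct and follows exactly the route the paper indicates: the paper does not prove this corollary but merely remarks that the QUBO~\eqref{eq:dinkelbach-unconstrained} ``can again be transformed to a max-cut problem, as explained in~\cite{Barahona1989}'' and then states the result, so your write-up is precisely the coefficient bookkeeping the paper omits. One small caveat: your parenthetical that $\sigma=\gamma n+1$ is ``a positive integer, exceeding $\gamma_n n$'' takes the statement a bit too literally --- as written in the paper this $\sigma$ need be neither integral nor larger than $\gamma_n n$ (it appears to be a typo for $\sigma=\gamma_n n+1$), but this does not affect the validity of the reduction itself.
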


	\section{Numerical results}
	\label{sec:numericalresults}
	All of our algorithms were written\footnote{The code is available on
		the arXiv page \url{https://arxiv.org/abs/2403.04657} and on \url{https://github.com/melaniesi/EdgeExpansion.jl}} in Julia~\cite{bezanson2017julia}
	version 1.9.2.
	That is, the split-and-bound Algorithm~\ref{alg:split-and-bound}
	including pre-elimination and 
	the transformation from $k$-bisection to max-cut problems.
	Also, Algorithm~\ref{alg:dinkelbach} we implemented in Julia.
	The SDPs to compute our cheap lower bounds~$\ell_k$ from the bisection problem in \cref{eq:cheaplb} are solved with
	MOSEK 10.0~\cite{mosek10.0julia} using JuMP~\cite{JuMP2023}.
        We also use JuMP to solve MIQCPs with Gurobi~\cite{gurobi} version 11.0.
	The solver BiqBin~\cite{biqbin} for binary quadratic problems was used to solve the parametrized problems in Dinkelbach's method, and we extended the C code of this solver by adding the option
	to provide an initial lower bound on the maximization problem. 
	The corresponding changes are tracked in the git repository \url{https://gitlab.aau.at/BiqBin/biqbin}.
	All computations were carried out on an AMD EPYC 7532 with 32~cores with
	3.30GHz and 1024GB~RAM, operated under Debian GNU/Linux~11.

	\subsection{Benchmark instances}
	\paragraph{Randomly generated 0/1-polytopes}
	The first class of graphs are the graphs of random 0/1-polytopes.
	The polytopes are generated by randomly selecting $n_d$~vertices of the polytope in dimension~$d$, i.e., $n_d$~different
	0/1-vectors in dimension~$d$. To obtain the graph, we then
	solve a linear programming feasibility problem to check whether
	there is an edge for a given pair of vertices.
    For any pair~$(d,n_d)$ with $d\in\{8,9,10\}$ and $n_8 \in \{164, 189\}$, $n_9 \in \{153,178,203,228,253,278\}$, and
	$n_ {10} \in \{256, 281\}$, we generated~3 random~0/1-polytopes.
		The choice of these values is motivated by the aim to generate graphs with~150
		to~300~vertices, and to randomly sample between~25\% and~75\% of the possible 0/1-vectors,
		with percentages chosen in incremental steps within this range.

	\paragraph{Grlex and grevlex graphs}
	Another class of graphs we consider are the graphs of grlex and grevlex
	polytopes introduced and characterised by Gupte and Poznanovi\'{c}~\cite{gupte2019dantzigfigures}.
	The grevlex-$d$ and grlex-$d$ instances of our benchmark set
	are the corresponding graphs of the polytopes in dimension~$d$.
	It was shown in~\cite{gupte2019dantzigfigures} that those graphs have
	a very specific structure and that the edge expansion of all
	grlex-$d$ instances is~1.
	
	\paragraph{DIMACS and Network graphs}
	The last category of graphs originates from the graph partitioning
	and clustering application.
	While the previous benchmark instances are graphs of polytopes, these instances
	model relations and networks.
	The set of DIMACS instances are the graphs of the 10th DIMACS challenge on graph partitioning and graph clustering~\cite{Bader2013}
	with at most 500~vertices.
	Additionally, we consider some more network graphs obtained from the online
	network repository~\cite{netzschleuder}.
	We in particular chose connected graphs with structural properties such as multiple clusters 
	or having some vertices of high degree and several vertices with small degree.

	\subsection{Discussion of the experiments}
	
	We compare different algorithms for computing the edge expansion of a graph, namely
	\begin{enumerate}
		\item Split \& bound Algorithm~\ref{alg:split-and-bound}, 
		\item Fractional programming using Discrete Newton-Dinkelbach's method in Algorithm~\ref{alg:dinkelbach},
                \item Gurobi for solving the MIQCP.
	\end{enumerate}

	\paragraph{Algorithm~\ref{alg:split-and-bound} vs. Algorithm~\ref{alg:dinkelbach} vs. Gurobi}
	The detailed results of our experiments are given
	in \Cref{tab:result-rand01,tab:result-grlex,tab:result-grevlex,tab:result-dimacs,tab:result-network}. 
	In each of the tables, the first column gives the name of the
	instance followed by the number of vertices and edges. Column~4
	reports the optimal solution, i.e., the edge expansion of the graph.
	
	In the split \& bound section of the table, the first two columns
	give the global lower and
	upper bound after the pre-elimination Algorithm~\ref{alg:pre-elimination}. 
	The number of candidates for~$k$ after the pre-elimination is given in
	column~3.
	In column~4 we report the number of indices~$k\in \mathcal I$ we were able to
	eliminate after solving the root node of the branch-and-bound tree.
	Column~5 lists the total number of branch-and-bound nodes in the max-cut algorithm for all
	values of $k$ considered. 
	The last two columns display the time spent in the pre-elimination
	and the total time (including pre-elimination) of the algorithm. 
	
	In the section for Dinkelbach's algorithm, the first column 
	gives the first guess for the edge expansion, i.e., the first
	trial for $\gamma$. As described before, we take the upper bound from
	the heuristic for this initialization. Note, that this first guess may differ
	from~$u^*$, since in the pre-elimination step of split \& bound we perform
	30 additional rounds of simulated annealing for all indices~$k \in \mathcal{I}$.
	Column~2 indicates how many parametrized problems $P(\gamma_i)$ have been
	solved, and column~3 gives the total number of branch-and-bound nodes
	for solving all parametrized problems.
	The fourth column of the results of Dinkelbach's algorithm displays
	the total time, including running the heuristic to obtain the
	first guess.

        The final column of the tables holds information about
        computing the edge expansion using Gurobi. For the graphs from
        the randomly generated polytopes, Gurobi did not succeed to
        solve any of these instances within a time limit of 3~hours,
        we therefore report the gap after this time limit
        in \Cref{tab:result-rand01}. 
        In \Cref{tab:result-grlex,tab:result-grevlex,tab:result-dimacs,tab:result-network}
        we report the time for computing the edge expansion.

        We highlight in the tables, which of our two algorithms
        performs better.

	\paragraph{Algorithm~\ref{alg:split-and-bound} (Split \& bound)}
	As can be seen in all tables, the pre-elimination phase of
	split \& bound only leaves a
	comparably small number of candidates for $k$ to be further
	investigated. Remember that the number of potential candidates is
	$\lfloor \frac n 2 \rfloor$, whereas $|\mathcal{I}|$ is the number of
	candidates that remain after the pre-elimination.
	For~12~instances we were able to compute the edge expansion even
	within the pre-elimination phase, i.e. the set of candidates~$\mathcal I$ was empty.
	For the randomly generated 0/1-polytopes on average only 12\% of the
	candidates have to be further examined, for the other instance classes
	we can approximately eliminate 80\% of the candidate values for~$k$ on average.
	Only for~6 instances we were not able to halve the number of candidates within
	the pre-elimination phase. All of those~6~instances are from the DIMACS or network graphs
	test set.
	This indicates that in general already the cheap SDP bound is of good quality.
	
	We also observe that the SDP bound in the root-node of the
	branch-and-bound tree is of high quality: in 48 out of the 67
	instances the gap is closed within the root node for all candidates to
	be considered.
	For the other instances the average percentage of candidates
	left after the root is~8\%, where only for one instance 
	the number of remaining candidates is still above~20\%.
	
	The heuristic for computing upper bounds also performs extremely well:
	for almost all instances the upper bound found is the edge expansion
	of the graph, see columns titled $h(G)$ and $u^*$. In fact,
	only for 3~instances the heuristic fails to find the optimal solution.
	
	Overall, we solve almost all of the considered instances within a few
	minutes, for very few instances the branch-and-bound tree grows rather
	large and therefore computation times exceed several hours.
	
	\paragraph{Algorithm~\ref{alg:dinkelbach} (Discrete Newton-Dinkelbach)}
	Whenever the heuristic already returns the value of the optimal
	solution, we only have to solve one parametrized problem to certify
	the optimality of this value. For most of the instances tested, this
	certificate is already obtained in the root node of the
	branch-and-bound tree. However, there are many instances among the random 0/1 polytopes where
	BiqBin terminates because of numerical problems even for the
	first parametrized problem, see \Cref{tab:result-rand01}. This in particular
	arises when  $\gamma_d$ and $\gamma_n$ (see Section \cref{sec:solve_par}) are large.
	
        \paragraph{Solving the MIQCP with Gurobi}
        To compute the edge expansion using Gurobi, we input the last
        formulation of~\eqref{hforms} adding the redundant constraint
        $y \ge 0$. Without this constraint, Gurobi terminated only
        after 1.65~hours/3\,548 work units (resp. more than
        24~hours/59\,000 work units) on a graph with 29~vertices and
        119~edges (resp. 37~vertices and 176~edges) corresponding to
        the grevlex polytope in dimension~7 (resp. 8).
        
        Adding the redundant constraint, Gurobi is very efficient for
        graphs with an expansion less than one,
        see \Cref{tab:result-dimacs,tab:result-network}, but as soon
        as the expansion (and also the number of vertices of the
        graph) gets larger, Gurobi cannot solve the instance within a
        few hours, 
        see \Cref{tab:result-rand01,tab:result-grevlex}.  

        \paragraph{Impact of the edge expansion}
          The performance of an
          algorithm is not in particular depending on their size or
          density.   
        As noted above, Gurobi is very efficient on graphs with an
        edge expansion less than one.
        As the expansion gets larger, our algorithms are the clear
        winners over Gurobi.

        As for the performance of Algorithm~\ref{alg:dinkelbach},
        we observe
        that for large expansion the algorithm performs very well in
        general. But we observe, that in particular if the edge expansion
        is a fraction of large (coprime) nominator and denominator,
        this is a disadvantage of the algorithm.

        For split \& bound, there seems to be no impact of the edge
        expansion on the performance.

	\paragraph{Conclusion}
        To summarize the results, we give a performance profile
        in \Cref{fig:performance}. Gurobi solves the MIQCP very
        efficiently for several instances, but fails to yield results
        for others within a time limit of 3~hours. It is the clear
        winner for instances with very small edge expansion.
	Comparing split \& bound with the algorithm following the Discrete
	Newton-Dinkelbach method,
	we observe the following behavior. 
	For the grlex instances, Dinkelbach performs extremely well compared to the
	split \& bound approach, see \Cref{tab:result-grlex}.
	Whereas for the grevlex instances in \Cref{tab:result-grevlex}, we observe that, except for dimension~13, the split \& bound algorithm by
	far outperforms Algorithm~\ref{alg:dinkelbach}.
	In addition to the already mentioned, there are some other instances where
	the difference in the runtimes between the two algorithms is significant.
	For example, on the instances \texttt{rand01-9-153-0} and \texttt{malaria\_genes\_HVR1}
	split \& bound clearly dominates Algorithm~\ref{alg:dinkelbach}, whereas the latter is significantly better on the instances \texttt{rand01-9-2781} and \texttt{sp-office}.
	
	The conclusion is that in general for graphs with larger edge expansion, the split~\& bound algorithm is best, and for graphs with small
	edge expansion Algorithm~\ref{alg:dinkelbach} has a better
	performance than split~\& bound, but there are a few exceptions, and the difference in the
	total time of solving an instance can be quite large.

        Overall, we conclude that with our algorithms we can compute the edge
        expansion of various graphs of size up to around 400 vertices
        and no other algorithm can achieve this. The time for solving
        an instance varies, it can be a few seconds for very
        structured instances and it can exceed one hour, in particular
        for the instances coming from 0/1-polytopes with rather large
        expansion. For standard branch-and-cut solvers like Gurobi
        these instances are out of reach.

	\begin{sidewaystable}
		\begin{center}
			\tiny
			\begin{tabular}{l|r|r|r||r|r|r|r|r|r|r||r|r|r|r||l}
				\multicolumn{4}{@{}c@{}}{}
				& \multicolumn{7}{@{}c@{}}{Algorithm~\ref{alg:split-and-bound} (split \& bound)}
				& \multicolumn{4}{@{}c@{}}{Algorithm~\ref{alg:dinkelbach} (Dinkelbach)}
				& \multicolumn{1}{@{}c@{}}{Gurobi}
				\\\cmidrule(r){1-4}\cmidrule(lr){5-11}\cmidrule(lr){12-15}\cmidrule(l){16-16}%
				\head{Instance} & \multicolumn{1}{@{}c@{}|}{$n$} &\multicolumn{1}{@{}c@{}|}{$m$} & \multicolumn{1}{@{}c@{}||}{ $h(G)$ } & \multicolumn{1}{@{}c@{}|}{$\min\, \ell_k$} & \multicolumn{1}{@{}c@{}}{$u^*$}& \multicolumn{1}{|@{}c@{}|}{$\lvert \mathcal I\rvert$} & \head{solved\\in root}& \head{B\&B\\nodes} &  \head{Alg.~\ref{alg:pre-elimination}\\time (s)} & \head{total\\time (s)} & \multicolumn{1}{@{}c@{}|}{\head{first\\guess}} & \head{\# of\\steps} & \head{B\&B\\nodes} & \head{total\\time (s)} & \head{relative\\gap} \\ \midrule
				\csvreader[head to column names, late after line = \\]{summary-splitnbound-rand01.csv}%
				{instance=\name, n=\size, m=\edges }{%
					\name & \size & \edges & \opt &\lb & \ub & \sizeI & \solvedinroot & \bbnodesTotal & \timePreelim & \timeSPLIThighlighted & \firstguess & \dinkelbachsteps &
					\dinkelbachBbnodestotal & \timeDBhighlighted & \gurobiRelGap
				} \bottomrule
			\end{tabular}
			\caption{Comparison of Algorithm~\ref{alg:split-and-bound}
				(split \& bound),
		Algorithm~\ref{alg:dinkelbach} (Dinkelbach) and Gurobi for graphs
		of random 0/1-polytopes. A ``-'' indicates that Algorithm~\ref{alg:dinkelbach} could not solve this instance.
                The last column displays the
		gap reported by Gurobi after a time limit of 3~hours.\\[0.2em]
		{\scriptsize * Increased timelimit of Gurobi to match the time one of the other algorithms needed for exact computation.}}
			\label{tab:result-rand01}
		\end{center}
	\end{sidewaystable}
	
	\begin{sidewaystable}
		\centering
		\scriptsize
		\begin{tabular}{l|r|r|r||r|r|r|r|r|r|r||r|r|r|r||r}
			\multicolumn{4}{@{}c@{}}{}
			& \multicolumn{7}{@{}c@{}}{Algorithm~\ref{alg:split-and-bound} (split \& bound)}
			& \multicolumn{4}{@{}c@{}}{Algorithm~\ref{alg:dinkelbach} (Dinkelbach)}
			& \multicolumn{1}{@{}c@{}}{Gurobi}
			\\\cmidrule(r){1-4}\cmidrule(lr){5-11}\cmidrule(lr){12-15}\cmidrule(l){16-16}%
			\head{Instance} & \multicolumn{1}{@{}c@{}|}{$n$} &\multicolumn{1}{@{}c@{}|}{$m$} & \multicolumn{1}{@{}c@{}||}{\,$h(G)$\,} & \multicolumn{1}{@{}c@{}|}{$\min\, \ell_k$} & \multicolumn{1}{@{}c@{}}{$u^*$}& \multicolumn{1}{|@{}c@{}|}{$\lvert \mathcal I\rvert$} & \head{solved\\in root}& \head{B\&B\\nodes} &  \head{Alg.~\ref{alg:pre-elimination}\\time (s)} & \head{total\\time (s)} & \multicolumn{1}{@{}c@{}|}{\,\head{first\\guess}\,} & \head{\# of\\steps} & \head{B\&B\\nodes} & \head{total\\time (s)} & \head{total\\time (s)}\\ \midrule
			\csvreader[head to column names, late after line = \\]{summary-splitnbound-grlex.csv}%
			{instance=\name, n=\size, m=\edges }{%
				\name & \size & \edges & \opt &\lb & \ub & \sizeI & \solvedinroot & \bbnodesTotal & \timePreelim & \timeSPLIThighlighted & \firstguess & \dinkelbachsteps &
				\dinkelbachBbnodestotal & \timeDBhighlighted & \gurobiSolTime
			} \bottomrule
		\end{tabular}
		\caption{Comparison of Algorithm~\ref{alg:split-and-bound}
			(split \& bound) and Algorithm~\ref{alg:dinkelbach} (Dinkelbach) for grlex instances.}
		\label{tab:result-grlex}

		\bigskip 
		
		\begin{tabular}{l|r|r|r||r|r|r|r|r|r|r||r|r|r|r||r}
			\multicolumn{4}{@{}c@{}}{}
			& \multicolumn{7}{@{}c@{}}{Algorithm~\ref{alg:split-and-bound} (split \& bound)}
			& \multicolumn{4}{@{}c@{}}{Algorithm~\ref{alg:dinkelbach} (Dinkelbach)}
			& \multicolumn{1}{@{}c@{}}{Gurobi}
			\\\cmidrule(r){1-4}\cmidrule(lr){5-11}\cmidrule(lr){12-15}\cmidrule(l){16-16}%
			\head{Instance} & \multicolumn{1}{@{}c@{}|}{$n$} &\multicolumn{1}{@{}c@{}|}{$m$} & \multicolumn{1}{@{}c@{}||}{\,$h(G)$\,} & \multicolumn{1}{@{}c@{}|}{$\min\, \ell_k$} & \multicolumn{1}{@{}c@{}}{$u^*$}& \multicolumn{1}{|@{}c@{}|}{$\lvert \mathcal I\rvert$} & \head{solved\\in root}& \head{B\&B\\nodes} &  \head{Alg.~\ref{alg:pre-elimination}\\time (s)} & \head{total\\time (s)} & \multicolumn{1}{@{}c@{}|}{\,\head{first\\guess}\,} & \head{\# of\\steps} & \head{B\&B\\nodes} & \head{total\\time (s)} & \head{total\\time (s)}\\ \midrule
			\csvreader[head to column names, late after line = \\]{summary-splitnbound-grevlex.csv}%
			{instance=\name, n=\size, m=\edges }{%
				\name & \size & \edges & \opt &\lb & \ub & \sizeI & \solvedinroot & \bbnodesTotal & \timePreelim & \timeSPLIThighlighted & \firstguess & \dinkelbachsteps &
				\dinkelbachBbnodestotal & \timeDBhighlighted & \gurobiSolTime
			} \bottomrule
		\end{tabular}
		\caption{Comparison of Algorithm~\ref{alg:split-and-bound}
			(split \& bound),
			Algorithm~\ref{alg:dinkelbach} (Dinkelbach)
			and Gurobi for grevlex instances.}
		\label{tab:result-grevlex}

	\end{sidewaystable}

	\begin{sidewaystable}
		\centering
		\tiny
		\begin{tabular}{l|r|r|r||r|r|r|r|r|r|r||r|r|r|r||r}
			\multicolumn{4}{@{}c@{}}{}
			& \multicolumn{7}{@{}c@{}}{Algorithm~\ref{alg:split-and-bound} (split \& bound)}
			& \multicolumn{4}{@{}c@{}}{Algorithm~\ref{alg:dinkelbach} (Dinkelbach)}
			& \multicolumn{1}{@{}c@{}}{Gurobi}
			\\\cmidrule(r){1-4}\cmidrule(lr){5-11}\cmidrule(lr){12-15}\cmidrule(l){16-16}%
			\head{Instance} & \multicolumn{1}{@{}c@{}|}{$n$} &\multicolumn{1}{@{}c@{}|}{$m$} & \multicolumn{1}{@{}c@{}||}{\,$h(G)$\,} & \multicolumn{1}{@{}c@{}|}{$\min\, \ell_k$} & \multicolumn{1}{@{}c@{}}{$u^*$}& \multicolumn{1}{|@{}c@{}|}{$\lvert \mathcal I\rvert$} & \head{solved\\in root}& \head{B\&B\\nodes} &  \head{Alg.~\ref{alg:pre-elimination}\\time (s)} & \head{total\\time (s)} & \multicolumn{1}{@{}c@{}|}{\,\head{first\\guess}\,} & \head{\# of\\steps} & \head{B\&B\\nodes} & \head{total\\time (s)} & \head{total\\time (s)}\\ \midrule
			\csvreader[head to column names, late after line = \\]{summary-splitnbound-dimacs.csv}%
			{instance=\name, n=\size, m=\edges }{%
				\name & \size & \edges & \opt &\lb & \ub & \sizeI & \solvedinroot & \bbnodesTotal & \timePreelim & \timeSPLIThighlighted & \firstguess & \dinkelbachsteps &
				\dinkelbachBbnodestotal & \timeDBhighlighted & \gurobiSolTime
			} \bottomrule
		\end{tabular}
		\caption{Comparison of
		Algorithm~\ref{alg:split-and-bound} (split \& bound),
			Algorithm~\ref{alg:dinkelbach} (Dinkelbach)
		and Gurobi for DIMACS instances.}
		\label{tab:result-dimacs}
		
		\bigskip
		
		\begin{tabular}{l|r|r|r||r|r|r|r|r|r|r||r|r|r|r||r}
			\multicolumn{4}{@{}c@{}}{}
			& \multicolumn{7}{@{}c@{}}{Algorithm~\ref{alg:split-and-bound} (split \& bound)}
			& \multicolumn{4}{@{}c@{}}{Algorithm~\ref{alg:dinkelbach} (Dinkelbach)}
			& \multicolumn{1}{@{}c@{}}{Gurobi}
			\\\cmidrule(r){1-4}\cmidrule(lr){5-11}\cmidrule(lr){12-15}\cmidrule(l){16-16}%
			\head{Instance} & \multicolumn{1}{@{}c@{}|}{$n$} &\multicolumn{1}{@{}c@{}|}{$m$} & \multicolumn{1}{@{}c@{}||}{\,$h(G)$\,} & \multicolumn{1}{@{}c@{}|}{$\min\, \ell_k$} & \multicolumn{1}{@{}c@{}}{$u^*$}& \multicolumn{1}{|@{}c@{}|}{$\lvert \mathcal I\rvert$} & \head{solved\\in root}& \head{B\&B\\nodes} &  \head{Alg.~\ref{alg:pre-elimination}\\time (s)} & \head{total\\time (s)} & \multicolumn{1}{@{}c@{}|}{\,\head{first\\guess}\,} & \head{\# of\\steps} & \head{B\&B\\nodes} & \head{total\\time (s)} & \head{total\\time (s)}\\ \midrule
			\csvreader[head to column names, late after line = \\]{summary-splitnbound-network.csv}%
			{instance=\name, n=\size, m=\edges }{%
				\name & \size & \edges & \opt &\lb & \ub & \sizeI & \solvedinroot & \bbnodesTotal & \timePreelim & \timeSPLIThighlighted & \firstguess & \dinkelbachsteps &
				\dinkelbachBbnodestotal & \timeDBhighlighted &  \gurobiSolTime
			} \bottomrule
		\end{tabular}
		\caption{Comparison of Algorithm~\ref{alg:split-and-bound}
			(split \& bound),
			Algorithm~\ref{alg:dinkelbach} (Dinkelbach)
			and Gurobi for network instances.}
		\label{tab:result-network}
	\end{sidewaystable}

\begin{figure}[hbt]
	\centering
		\begin{tikzpicture}[trim axis left]
		\footnotesize
		\begin{axis}[legend pos = north west, 
		xtick={0,100,200,300,400,500},
		ytick={0,0.2,0.4,0.6,0.8,1},
		enlarge x limits=false,
		enlarge y limits={upper},
		legend cell align={left},
		xlabel= time (sec),
		ylabel= \% of instances solved,
		ymin=0,
		ymax=1]
		\addplot+[red, mark=10-pointed star] table [x=time, y=gurobi]{data-performance-plot/performance-smalltimes.dat};
		\addlegendentry{\footnotesize Gurobi}
		\addplot+[green, mark options={fill=green}] table [x=time, y=dinkelbach]{data-performance-plot/performance-smalltimes.dat};
		\addlegendentry{\footnotesize Dinkelbach}
		\addplot+[blue, mark options={fill=blue}] table [x=time, y=splitandbound]{data-performance-plot/performance-smalltimes.dat};
		\addlegendentry{\footnotesize Split \& Bound}
		\end{axis}
		\end{tikzpicture}%
		\hfill
		\centering
		\begin{tikzpicture}[trim axis left]
		\footnotesize
		\begin{axis}[legend pos = north west, 
		xtick={2000,4000,6000,8000,10000},
		ytick={0,0.2,0.4,0.6,0.8,1},
		yticklabel pos=right,
		enlarge x limits=false,
		enlarge y limits={upper},
		legend cell align={left},
		xlabel= time (sec),
		ymin=0,
		ymax=1]
		\addplot+[red, mark=10-pointed star] table [x=time, y=gurobi]{data-performance-plot/performance-largetimes.dat};
		\addplot+[green, mark options={fill=green}] table [x=time, y=dinkelbach]{data-performance-plot/performance-largetimes.dat};
		\addplot+[blue, mark options={fill=blue}] table [x=time, y=splitandbound]{data-performance-plot/performance-largetimes.dat};
		\end{axis}
		\end{tikzpicture}
	\caption{Performance comparison of the exact algorithms. Note
	the different scale on the $x$-axis: the plot on the left
	displays the time range from~0 to 500~seconds, the plot on the
	right from 500~seconds to 3~hours.} 
	\label{fig:performance}
\end{figure}
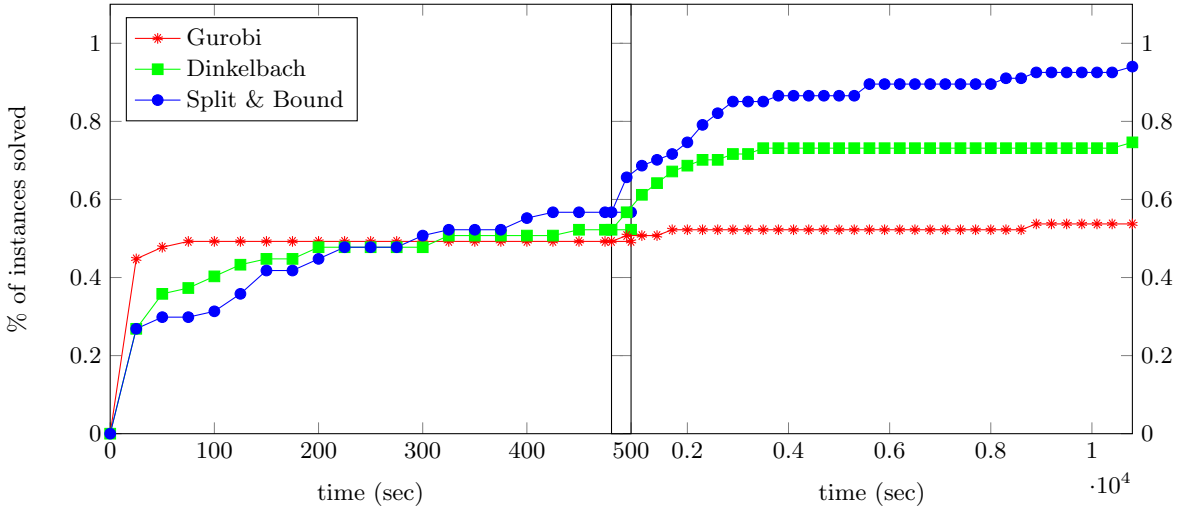

	\section{Summary and future research}
	\label{sec:summary}
	We developed a split \& bound algorithm as well as an algorithm
	applying Dinkelbach's idea for fractional programming to compute the edge
	expansion of a graph. The splitting refers to the fact, that we
	consider the different values of $k$ ($k$ being the size of the smaller
	partition) separately. We used semidefinite programming in both phases
	of our algorithm: on the one hand, SDP-based bounds are used to
	eliminate several values for~$k$ and we use an SDP-based 
	max-cut solver to 
	solve the problem for~$k$ fixed.
	Also, the algorithm following the Dinkelbach framework uses
	semidefinite programming in order to solve the underlying parametrized
	problems. 
	Through numerical results on various graph classes, we demonstrate
	that our split-and-bound algorithm is a robust method for
	computing the edge expansion while Dinkelbach's algorithm and
	Gurobi are very sensitive with respect to the edge expansion
	of the graph.
	
	In some applications, one is not interested in the exact value
	of the edge expansion but wants to check whether a certain
	value is a lower bound on the edge expansion, e.g., to check
	the Mihail-Vazirani conjecture on 0/1-polytopes. We
	implemented an option in our algorithm that enables this
	feature of verifying a given lower bound.

        As a heuristic, we use a simulated annealing approach that works very
	well for the problem sizes we are interested in. However, if one wants
	to obtain high-quality solutions for larger instances, a more
	sophisticated heuristic will be needed. Tabu-search, genetic
	algorithms, or a heuristic in the spirit of the Goemans-Williamson
	rounding could be potential candidates. In future research, we will also investigate convexification techniques by using recent results on fractional programming \cite{he2023convexification} and on exploiting submodularity of the cut function as has been done for mixed-binary conic optimization \cite{atamturk2020submodularity}.
	
	\bibliographystyle{spmpsci}
	\bibliography{expansion}
	
	\appendix
        
        \section{Computing the maximum cut of a graph}\label{sec:biqbin}
	Some of our algorithms for computing $\h$ rely on finding the maximum
	cut in a graph. For computing the value of the max-cut, we will use
	the SDP-based solver BiqBin~\cite{biqbin}. Note that the software
	BiqBin can not only compute the max-cut in a graph and solve
	quadratic unconstrained binary problems (QUBOs) but it is also
	applicable to linearly constrained binary problems with a quadratic
	objective function.
	However, we only need the max-cut solver in this work, and briefly
	describe the main ingredients in this section.
	
	BiqBin is a branch-and-bound algorithm that uses a tight SDP relaxation
	as upper bound and the celebrated Goemans-Williamson rounding
	procedure to generate a high-quality lower bound on the value of the
	maximum cut in a graph. To be more precise, the SDP   
	\begin{equation}\label{eq:maxcutsdp}
	\max_{\mat X}\, \Big\{ \frac 1 4 \inprod{\mat L}{\mat X} : \diag(\mat X)
	= \vec e,\, \mathcal{A}(\mat X) = \vec b,\,  \mat X \succeq 0 \Big\}
	\end{equation}
	where $\mathcal{A}(\mat X) \le \vec b$ models a set of triangle-,
	pentagonal- and heptagonal-inequalities is approximately solved using
	a bundle method. To do so, only the inequality constraints are dualized
	yielding the nonsmooth convex  partial dual function 
	\begin{align*}
	f(\vec \gamma) &= \max_{\mat X} \Big\{ \frac 1 4 \langle  \mat L, \mat X\rangle
	- \vec \gamma^\top( \mathcal{A}(\mat X) - \vec b) : \diag(\mat X) \
	= \ \vec e,\, \mat X \succeq 0\Big\}\\
	&= \vec b^\top \vec \gamma + \max_{\mat X} \Big\{\Big\langle \frac 1 4 \mat L - \mathcal{A}^\top
	(\vec \gamma), \mat X \Big\rangle : \diag(\mat X) 
	= \vec e,\, \mat X \succeq 0\Big\}
	\end{align*}
	where $\vec \gamma$ are the nonnegative dual variables associated with
	the constraints $\mathcal{A}(\mat X)\le \vec b$.
	Evaluating the dual function $f(\vec \gamma)$ and computing the subgradient amounts to solving an SDP 
	that can be efficiently computed using an interior-point method tailored for this problem.
	It provides us with the matching pair $(\mat
	X_{\gamma}, \vec \gamma)$
	such that $f(\vec \gamma) = \vec b^\top \vec \gamma + \langle \nicefrac 1 4 \mat L - \mathcal{A}^\top
	(\vec \gamma), \mat X_\gamma \rangle$.
	Moreover, the subgradient of $f$ at $\vec \gamma$ is given by $\partial
	f(\vec \gamma) = \vec b - \mathcal{A}(\mat X_{\gamma}).$
	For obtaining an approximate minimizer of problem
	\begin{equation*}
	\min_{\vec \gamma} \{ f(\vec \gamma) :  \vec \gamma \ge 0\},
	\end{equation*}
	the bundle method is used. We refer to~\cite{ReRiWi:10} for more details.

	Note that interior-point 
	methods are far from computing a solution of \cref{eq:maxcutsdp}
	already for small graphs due to the number of constraints being too
	large and therefore already forming the system matrix is an expensive
	task or even impossible due to memory requirements. 
	
	BiqBin dominates all max-cut solvers based on linear programming and
	is comparable to the SDP-based solver
	BiqCrunch~\cite{krislock2017biqcrunch}.
	Moreover, BiqBin is available as a parallelized version.

\end{document}